\newtheorem{theorem}{Theorem}[section]
\newtheorem{definition}{Definition}[section]
\newtheorem{remark}{Remark}[section]
\newtheorem{lemma}{Lemma}[section]
\newtheorem{proposition}{Proposition}[section]
\numberwithin{equation}{section}
\begin{document}
\title[Embedding Results and Nonlocal Problem in Unbounded Domains]{Embedding Results and Fractional $p(x,.)$-Laplacian Problem in Unbounded Domains}
\author[A. BARBARA, A. BOUSMAHA, and M. SHIMI]
{A. BARBARA$^1$, A. BOUSMAHA$^1$, and  M. SHIMI$^2$, }
\noindent  \address{Abdelkrim  BARBARA, Ahmed BOUSMAHA\newline Faculty of Sciences Dhar el Mahraz, Laboratory of Modeling, Applied Mathematics, and Intelligent Systems, Sidi Mohamed Ben Abdellah University, Fez, Morocco}
 \email{$^{1}$abdelkrim.barbara@usmba.ac.ma} 
  \email{$^{2}$ahmed.bousmaha@usmba.ac.ma}
\noindent  \address{Mohammed Shimi\newline
Department of Mathematics and Computer Science, Laboratory of Mathematics and Applications to Engineering Sciences, ENS of Fez, Sidi Mohamed Ben Abdellah University, Fez, Morocco.}
\email{$^3$mohammed.shimi2@usmba.ac.ma }
\subjclass[2010]{35R11, 47G20, 35S15, 35A15.}
\keywords{Fractional Sobolev spaces with variable exponent, Unbounded domains, Embedding results, Nonlocal problems, Mountain Pass Theorem}
\date{Month, Day, Year}
\begin{abstract}
In this paper, we prove a new continuous embedding theorem for fractional Sobolev spaces with variable exponents into variable exponent Lebesgue spaces on unbounded domains.
 As an application, we study a class of nonlocal elliptic problems driven by the fractional $p(x, \cdot)$-Laplacian operator. Using variational methods combined with the established embedding result, we prove the existence of nontrivial weak solutions under suitable growth and regularity conditions on the nonlinearity.\\  
A significant analytical challenge addressed in this work arises  from   both the nonlocal behavior of the fractional $p(x, \cdot)$-Laplacian and the lack of compactness induced by the unboundedness of the domain.
\end{abstract}
\maketitle
\tableofcontents
\section{Introduction and statement of the main results}
Lebesgue and Sobolev spaces ($L^p, W^{m,p} \text{ with } m\in \mathbb{N},$ \cite{Adams}) were and are the backbone (in the widest sense of the word) of the theory of function spaces and PDE's from the very beginning up to our time. However, classical Lebesgue and Sobolev spaces face limitations when dealing with nonlinear problems in applied sciences.  \\

To overcome these challenges, variable exponent Lebesgue-Sobolev spaces $(L^{p(x)}, W^{m,p(x)})$ were introduced. These spaces are not only important in theoretical studies but also useful for modeling problems with variable exponent growth, which arise in various scientific applications (see for instance \cite{Diening3,KovRak,Ruz}).  \\

Another significant extension is provided by fractional Sobolev spaces$W^{s,p}$, with $s \in (0,1)$ \cite{DiNezza}, play a key role in functional and harmonic analysis and have been widely applied in real-world problems.  
As a continuation of these advancements, many scholars combined the variable exponent setting with fractional case for introducing the fractional Sobolev spaces with variable exponent $W^{s,p(x,y)}$ which opening new perspectives in functional and harmonic analysis and its applications.\\

\noindent The qualitative functional properties, as well as some continuous and compact embedding theorems of these spaces into variable exponent Lebesgue spaces and H\"{o}lder spaces, have been established. 
Precisely, Kaufmann et al.  \cite{Kaufmann} introduced the new fractional Sobolev spaces with variable exponents  $W^{s, q(x), p(x,y)}(\Omega)$ with the restriction $p(x, x)<q(x)$, that takes into account a fractional variable exponent operator $(-\Delta)_{p(x,.)}^{s}$. Moreover, they established a continuous and compact embedding theorem of these spaces into variable exponent Lebesgue spaces. As an application, they studied the existence and uniqueness of a solution for a nonlocal problem involving the fractional $p(x,.)$-Laplacian operator. In this context, the authors  in \cite{Bahrouni1}  investigated fundamental properties of these function spaces and the related nonlocal operator  $(-\Delta)_{p(x,.)}^{s}$.  They also show the existence of at least one solution for equations involving the fractional $p(x,.)$-Laplacian. Subsequently,  Ho and Kim \cite{Ho} proved some fundamental embedding results for the fractional Sobolev space with variable exponent in the case $p(x, x)\leq q(x)$ along with applications such as a priori bounds and multiplicity of solutions of the fractional $p(x,\cdot)$-Laplacian problems. The authors in \cite{Rossi} considered the case $q(x)=p(x,x)$ and established a trace theorem in fractional spaces with variable exponents.\\

 Accordingly, In \cite{ABS2} , the authors considered  the case $p(x,x)=q(x)$  which represents the true generalization of the fractional Sobolev space  $W^{s,p}$ in the constant case. Afterward, they provided a new version of continuous and compact embedding of theses spaces into generalized Lebesgue spaces. In \cite{ABS9} they further established  an embedding results into H\"{o}lder spaces $\mathscr{C}^{0,r}_b$. \\
 
 Since The space  $W^{s, p(x,y)}(\Omega)$ is not suitable for studying the fractional $p(x,.)$-Laplacian problems with Dirichlet boundary data $u=0$ in $\mathbb{R}^{N} \setminus \Omega$ via variational methods and hence, \cite{ABS4, ABS8}  introduced a more appropriate space, denoted by $W^{s,p(x,y)}(Q)$. For this new  space $W^{s,p(x,y)}(Q)$, they established their qualitative properties and compared it with the space $W^{s,p(x,y)}(\Omega)$ Furthermore, they extended all the continuous and compact embedding results previously established for $W^{s,p(x,y)}(\Omega)$ into the spaces $W^{s,p(x,y)}(Q)$ and $W_K^{s,p(x,y)}(Q)$.\\

To the best of our knowledge, all the aforementioned results have been considered in the case of a \textit{bounded} open domain $\Omega$ of $\mathbb{R}^N$. This naturally raises the question : \textit{what happens if we replace the bounded domain with an unbounded one}?  \\ As in the classical setting, studying nonlocal problems in unbounded domains presents several challenges compared to bounded domains. One of the main difficulties is the lack of compactness, as many fundamental functional analysis tools, such as the Rellich-Kondrachov compactness theorem, no longer hold. As a result, weak convergence results become more difficult to establish in this framework.\\

Hence, our main aim in this paper is to start addressing these new challenges by considering an \textit{unbounded domain $\Omega$ with cone property}. Our goal is to establish an embedding theorem and discuss the existence of nontrivial solutions for a fractional  $p(x,\cdot)$-Laplacian problem in this setting.\\

Now,	Given  an unbouded domain $\Omega$  with cone condition (see Definition \ref{00}). We start by fixing $s \in (0,1)$ and let $p:\overline{\Omega}\times\overline{\Omega}\longrightarrow (1,+\infty)$ be an uniformly continuous function such that 
	\begin{equation}\label{n}
		1<p^{-}=\inf_{(x,y)\in \overline{\Omega}\times\overline{\Omega}} p(x,y)\leq p(x,y)\leq p^{+}=\sup_{(x,y)\in \overline{\Omega}\times\overline{\Omega} }p(x,y)<+\infty
	\end{equation}
	and
	\begin{equation}\label{l} 
		p\text{ is symmetric, that is, }p(x,y)=p(y,x) \quad \text{for all }(x,y)\in \overline{\Omega}\times\overline{\Omega}.
	\end{equation}
	We set 
	$$
	\overline{p}(x)=p(x,x) \quad \text{for all }x\in \overline{\Omega}.
	$$
In order to state our main embedding result, we define the fractional Sobolev space with variable exponent via the Gagliardo approach  as follows :
		\begin{align*}
		W &=W^{s,p(x,y)}(\Omega)\\
		&=\left\{ u \in L^{\overline{p}(x)}(\Omega) \text{ :}
		\,\int_{\Omega\times\Omega} 
		\frac{|u(x)-u(y)|^{p(x,y)}}{\lambda^{p(x,y)}|x-y|^{N+sp(x,y)}}\,dxdy<\infty,\,\, \text{for some }\lambda>0
		\right\},
		\end{align*}
	where $L^{\overline{p}(x)}(\Omega)$ is the Lebegue space with variable exponent (see section $\ref{21}$).\\ 
	The space $W^{s,p(x,y)}(\Omega)$ is a Banach space (see  \cite{ABS2,Rossi,Kaufmann}) if it is equipped with the norm
	$$
	\|u\|_{W}=\|u\|_{L^{\overline{p}(x)}(\Omega)}+[u]_{s,p(x,y)},
	$$
	where $[.]_{s,p(x,y)}$ is a Gagliardo seminorm with variable exponent, which is defined by
	$$
	[u]_{s,p(x,y)}=[u]_{s,p(x,y)}(\Omega)=\inf\left\{\lambda>0:\int_{\Omega} \int_{\Omega} 
	\frac{|u(x)-u(y)|^{p(x,y)}}{\lambda^{p(x,y)}|x-y|^{N+sp(x,y)}}\,dxdy<1\right\}.
	$$
 The space $(W,\|.\|_{W})$ is a separable reflexive \cite[Lemma 3.1]{Bahrouni1}.\\
 
\noindent In classical Sobolev space theory, embedding results play a fundamental role, particularly in functional analysis and the study of fractional Sobolev spaces. These results are essential for establishing the existence, uniqueness, and regularity of solutions to nonlocal problems, providing a rigorous framework for analyzing the qualitative behavior of solutions and ensuring their well-posedness. In this context, we present our first main result : an embedding theorem that serves as a key tool for addressing such nonlocal problems.
 	\begin{theorem}\label{bb}
 		Let $\Omega\subset \mathbb{R}^{N}$ be an unbounded domain with cone condition and $s\in (0,1)$. Let $p:\overline{\Omega}\times \overline{\Omega}\longrightarrow(1,+\infty)$ be an uniformly continuous function satisfying $(\ref{n})$ and $(\ref{l})$ with $sp^{+}<N$. Assume that $q:\overline{\Omega}\longrightarrow(1,+\infty)$ is an uniformly continuous function such that 
 		\begin{equation}\label{1.3}
 		\overline{p}(x)\leq q(x)\qquad\text{ for all }x\in \overline{\Omega}
 		\end{equation}
 		and
 		\begin{equation}\label{1.4}
 		\inf_{x\in \overline{\Omega}}\left(p_{s}^{*}(x)-q(x)\right)>0.
 		\end{equation}
 		Then $W^{s,p(x,y)}(\Omega)$ is continuously embedded in $L^{q(x)}(\Omega)$.\\
 	\end{theorem}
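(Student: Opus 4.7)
The plan is to reduce the unbounded embedding to the already-established bounded-domain embedding (from \cite{ABS2, Ho, Kaufmann}) via a uniformly locally finite covering of $\Omega$ adapted to the cone condition, using the uniform continuity of the exponents to keep all local constants independent of the piece.

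\emph{Choice of scale.} Put $\delta_{0} := \inf_{\overline{\Omega}}\bigl(p_{s}^{*}(x) - q(x)\bigr) > 0$, which is strictly positive by \eqref{1.4}. Fix a small parameter $\eta \in (0, \delta_{0}/4)$. Using the uniform continuity of $p$ and $q$, I select $r_{0} > 0$ so small that for every $x_{0} \in \overline{\Omega}$ and every $x, y \in \overline{\Omega} \cap \overline{B}(x_{0}, 2 r_{0})$,
\begin{equation*}
|p(x,y) - \overline{p}(x_{0})| < \eta, \qquad |q(x) - q(x_{0})| < \eta,
\end{equation*}
and, by taking $r_{0}$ no larger than a fixed fraction of the cone height, the intersection $B(x_{0}, r_{0}) \cap \Omega$ inherits a cone condition with parameters depending only on those of $\Omega$ (hence uniform in $x_{0}$).

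\emph{Covering and uniform local embedding.} A standard Vitali-type argument yields a countable family $\{x_{i}\} \subset \overline{\Omega}$ such that the balls $B_{i} := B(x_{i}, r_{0})$ cover $\Omega$, with $\sum_{i} \mathbf{1}_{B_{i}}(z) \leq M(N)$ for every $z \in \mathbb{R}^{N}$. Set $\Omega_{i} := B_{i} \cap \Omega$. On each $\Omega_{i}$ the exponents oscillate by at most $\eta$, and $q(x) \leq (\overline{p}(x_{i}))_{s}^{*} - \delta_{0}/2$ once $\eta$ is small. Applying the bounded-domain embedding of \cite{ABS2, Ho} on $\Omega_{i}$ and carefully tracking dependencies of constants, I obtain
\begin{equation*}
\|u\|_{L^{q(x)}(\Omega_{i})} \leq C \bigl( \|u\|_{L^{\overline{p}(x)}(\Omega_{i})} + [u]_{s, p(x,y)}(\Omega_{i}) \bigr),
\end{equation*}
with $C$ depending only on $N, s, p^{\pm}, r_{0}, \delta_{0}$ and the uniform cone parameters, hence independent of $i$. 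Then, for $u \in W$ with $\|u\|_{W} \leq 1$, passing to modulars and using $\Omega \subset \bigcup_{i} \Omega_{i}$ gives $\int_{\Omega} |u|^{q(x)} \, dx \leq \sum_{i} \int_{\Omega_{i}} |u|^{q(x)} \, dx$, where each term on the right is controlled by the local embedding together with norm--modular equivalence. Bounded overlap produces $\sum_{i} \int_{\Omega_{i}} |u|^{\overline{p}(x)} \, dx \leq M \int_{\Omega} |u|^{\overline{p}(x)} \, dx$ and, analogously, $\sum_{i} \iint_{\Omega_{i} \times \Omega_{i}} \frac{|u(x) - u(y)|^{p(x,y)}}{|x-y|^{N + sp(x,y)}} \, dx\, dy \leq M^{2} \iint_{\Omega \times \Omega} \frac{|u(x) - u(y)|^{p(x,y)}}{|x-y|^{N + sp(x,y)}} \, dx\, dy$, both bounded by hypothesis. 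A standard homogeneity/rescaling argument then upgrades the resulting modular bound to $\|u\|_{L^{q(x)}(\Omega)} \leq C' \|u\|_{W}$.

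The decisive difficulty is the uniform local embedding: the constant $C$ in the bounded-domain result must be shown to depend \emph{only} on structural data that remain bounded across pieces --- not, for instance, on a Poincar\'e-type constant or a compactness-based extraction that could degenerate as $x_{i}$ escapes to infinity. This requires a careful re-examination of the proofs in \cite{ABS2, Ho}, replacing any appeal to finite subcovers by quantitative estimates in terms of diameter, cone parameters, and exponent oscillation. Once this uniformity is secured, the summation step is routine; in particular, the nonlocal character of the Gagliardo seminorm costs nothing here because the local embedding only consumes short-range interactions with $|x - y| \leq 2 r_{0}$, which are precisely the ones captured by the sums over $\Omega_{i} \times \Omega_{i}$.
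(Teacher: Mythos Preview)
Your strategy has a genuine gap at the ``passing to modulars'' step. From the local norm inequality $\|u\|_{L^{q(x)}(\Omega_i)} \leq C\bigl(\|u\|_{L^{\bar p(x)}(\Omega_i)} + [u]_{s,p}(\Omega_i)\bigr)$ you cannot simply invoke norm--modular equivalence to obtain a summable bound of the form $\int_{\Omega_i} |u|^{q(x)}\,dx \lesssim \int_{\Omega_i}|u|^{\bar p(x)}\,dx + \iint_{\Omega_i\times\Omega_i}\frac{|u(x)-u(y)|^{p(x,y)}}{|x-y|^{N+sp(x,y)}}\,dx\,dy$. Converting the left side via Proposition~\ref{5} gives $\int_{\Omega_i}|u|^{q(x)} \leq \|u\|_{L^{q(x)}(\Omega_i)}^{q^-}$ (for local norm $\leq 1$), while on the right $\|u\|_{L^{\bar p(x)}(\Omega_i)} \leq \bigl(\int_{\Omega_i}|u|^{\bar p(x)}\bigr)^{1/p^+}$; combining, you need $\sum_i \rho_i^{\,q^-/p^+}$ controlled by $\sum_i \rho_i$, which requires $q^- \geq p^+$. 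That is \emph{not} assumed in Theorem~\ref{bb} (only $q(x)\geq \bar p(x)$), and if $q^-/p^+<1$ the sum can diverge even when $\sum_i \rho_i < \infty$. The second gap you flag yourself: the uniformity in $i$ of the constant coming from \cite{ABS2,Ho} is asserted but not established, and those proofs do rely on compactness of $\overline\Omega$ in ways that are not transparently quantitative.

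The paper avoids both difficulties by a different route. Rather than citing the variable-exponent bounded-domain embedding, it \emph{freezes} the exponents on each ball $B_k$ to the constants $p_k^-$ and $(p_k^-)_t^*$ (introducing an auxiliary $t\in(0,s)$) and applies the classical constant-exponent embedding of Theorem~\ref{24}, whose constant depends only on $N$, $t$, $p_k^-\in[p^-,p^+]$, and the common ball radius --- hence is visibly uniform in $k$. And instead of summing over a cover, it multiplies $u$ by cutoffs $\psi_k$ supported in $B_k$, obtains a single uniform estimate $\|\psi_k u\|_{L^{q(x)}(\Omega)} \leq C\|u\|_W$, and passes to the limit via Fatou's lemma (first reducing to $u\in L^\infty$ by truncation). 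The freezing-to-constants device is precisely the ingredient that would also repair your summation argument, since for constant exponents the norm embedding immediately yields a modular inequality with matching homogeneity.
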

Based on this embedding result,  we aim to investigate the following nonlocal problem associated with the fractional $p(x,.)$-Laplacian :
	$$\label{P}
	(\mathcal{P}_{s})~~\left \{ \begin{array} {rll}
		\left(-\Delta\right)_{p(x,.)}^{s}u(x)+b(x)|u(x)|^{\overline{p}(x)-2}\,u(x)&=f(x,u)   & \text{in } \Omega,\\
		u &=0 &  \text{in } \mathbb{R}^{N}\backslash\Omega,
	\end{array} \right. 
	$$
	where :
	\begin{itemize}
	\item  $\Omega$ is an unbounded domain in $\mathbb{R}^{N}$ with cone property.
	\item $0<b_{0}\leq b(x)\in L^{\infty}(\Omega)$.
	\item $p:\overline{\Omega}\times\overline{\Omega}\longrightarrow (1,+\infty)$ is  an uniformly continuous function satisfying \eqref{n}-\eqref{1.4}.
	\item The fractional $p(x,.)$-Laplacian operator is given by
			$$
				\left(-\Delta\right)_{p(x,.)}^{s}u(x)=v.p.\int_{\mathbb{R}^{N}} \frac{|u(x)-u(y)|^{p(x,y)-2}(u(x)-u(y))}{|x-y|^{N+sp(x,y)}}\,dy,\quad \forall x \in \mathbb{R}^{N},
			$$
			where $p.v.$ is a commonly used abbreviation  in the principal value sense.
  
	\item The nonlinearity $f:\Omega\times \mathbb{R}\longrightarrow\mathbb{R}$ is a Caratheodory function such that :
	\begin{center}
	$\hspace*{-3cm}(H_{1}):\label{H1}$  $\qquad\quad|f(x,t)|\leq g(x)|t|^{q(x)-1} \quad $ for all $(x,t) \in \Omega\times \mathbb{R}$,
	\end{center} with $q:\overline{\Omega}\longrightarrow(1,+\infty)$ is a bounded uniformly continuous function such that 
	$$
	1<p^{+}<q^{-}=\inf_{x\in \overline{\Omega}} q(x)<p_{s}^{*}(x) \quad \text{for all }x\in \overline{\Omega}
	$$
	and $g \in L^{r(x)}(\Omega)\cap L^{\infty}(\Omega)$ and $g(x)\geq 0$, where $r:\overline{\Omega}\longrightarrow(1,+\infty)$ satisfies
	$$
	\overline{p}(x)\leq h(x)=\frac{r(x)q(x)}{r(x)-1}<p_{s}^{*}(x), \quad \text{for all }x\in \overline{\Omega}.
	$$
	$(H_{2}):\label{H2}$ There exists a positive constant $\mu>p^{+}$ such that 
	$$
	0<\mu F(x,t)\leq tf(x,t) \quad \text{for all }x\in\Omega,~~ t\neq 0,
	$$ 
	where $F(x,t)=\int_{0}^{t}f(x,\tau)\,d\tau \quad \text{for all }(x,t)\in \Omega\times \mathbb{R}$.\\
	\end{itemize}		
In the literature, a significant amount of research has been conducted on the local version of problem \hyperref[P]{($\mathcal{P}_s$)}, specifically on $p(x)$-Laplacian problems in unbounded domains. A notable contribution is the work of X. Fan and X. Han \cite{Fan}, in which the following problem has been studied:
	$$
	(\mathcal{P}_{1})~~\left \{ \begin{array} {rcl}
		-div\left(|\nabla u|^{p(x)-2}\nabla u\right)+|u|^{p(x)-2}u&=&f(x,u) \quad  \text{in } \mathbb{R}^{N},\\
		u\in W^{1,p(x)}(\mathbb{R}^{N}),
	\end{array} \right. 
	$$
Using variational methods, they  investigated the existence and multiplicity of solutions of problem $	(\mathcal{P}_{1})$, under same suitable conditions on $p$ and $f$.\\
\noindent   In this context, F. Yongqiang in \cite{Yong}  establishes the existence of nontrivial weak solutions to the following $p(x)$-Laplacian problem using the mountain pass theorem
	$$
	(\mathcal{P}_{2})\left \{ \begin{array} {rcll}
		-div\left(a(x)|\nabla u|^{p(x)-2}\nabla u\right)+b(x)|u|^{p(x)-2}u&= & f(x,u) &  \text{in } \Omega,\\
		u &=&0 &  \text{in }\partial\Omega,
	\end{array} \right. 
	$$
	where $\Omega \subset \mathbb{R}^{N}$ is an exterieur domain, $p:\overline{\Omega}\longrightarrow(1,+\infty)$ is a bounded Lipshitz continuous function and $f(x,u)=g(x)|u|^{\alpha(x)}$ with $\alpha(\cdot)$ is a variable exponent with special assumptions. \\
	
	\noindent In the fractional scenario, the authors in \cite{ABS0}, deal with  the next class of fractional $p(x,\cdot)$-Kirchhoff-type problems in $\mathbb{R}^N$
	$$
	\left(\mathcal{P}_3\right)~~\left\{\begin{array}{l}
	M\left(\int_{\mathbb{R}^N \times \mathbb{R}^N} \tfrac{1}{p(x, y)} \tfrac{|u(x)-u(y)|^{p(x, y)}}{|x-y|^{N+s p(x, y)}} \mathrm{d} x \mathrm{~d} y\right)\left(-\Delta\right)^s_{p(x,\cdot)} u(x) 
	+|u|^{\bar{p}(x)-2} u=f(x, u) ~ \text { in } \mathbb{R}^N \\
	u \in W^{s, p(x, y)}\left(\mathbb{R}^N\right).
	\end{array}\right.
	$$
	They establish a compact embedding of the space $W^{s, p(x, y)}\left(\mathbb{R}^N\right)$ into $L_{w(x)}^{q(x)}\left(\mathbb{R}^N\right)$. Using this embedding result and some critical-points theorems, they obtain the existence and multiplicity results for the aforementioned problem. \\

\noindent Inspired by these results and aiming to treat related nonlocal problems, we now turn our attention to the analysis of problem \hyperref[P]{($\mathcal{P}_s$)}, for which we define the notion of weak solution as follows :  
		We say that $u\in X_{0}$ is a weak solution of problem \hyperref[P]{($\mathcal{P}_s$)} if
		\begin{align*}
		&\int_{Q}\frac{|u(x)-u(y)|^{p(x,y)-2}(u(x)-u(y))(\phi(x)-\phi(y))}{|x-y|^{N+sp(x,y)}}\,dxdy+\int_{\Omega}b(x)|u(x)|^{\overline{p}(x)-2}u(x)\,\phi(x)\,dx\\
		&\hspace{2cm}-\int_{\Omega}f(x,u)\,\phi(x)\,dx=0,\qquad \forall \phi\in X_{0}.
	\end{align*}

 \noindent Hence, the second main result of this paper can be stated as follows.
   \begin{theorem}\label{3}
   	Let $\Omega\subset \mathbb{R}^{N}$ be unbounded domain with cone condition and $s\in (0,1)$. Let $p:\overline{Q}\longrightarrow(1,+\infty)$ be an uniformly continuous function satisfies $(\ref{n})$ and $(\ref{l})$ on $\overline{Q}$ with $sp^{+}<N$. Assume that $f$ satisfies \hyperref[H1]{$(H_{1})$} and \hyperref[H2]{$(H_{2})$}, then problem \hyperref[P]{($\mathcal{P}_s$)} has a nontrivial solution.
   \end{theorem}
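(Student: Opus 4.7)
The plan is to apply the classical Mountain Pass Theorem of Ambrosetti--Rabinowitz to the energy functional
\[
J(u) = \int_Q \frac{|u(x)-u(y)|^{p(x,y)}}{p(x,y)\,|x-y|^{N+sp(x,y)}}\,dxdy + \int_\Omega \frac{b(x)}{\overline{p}(x)}|u(x)|^{\overline{p}(x)}\,dx - \int_\Omega F(x,u)\,dx
\]
defined on $X_0$, whose critical points are precisely the weak solutions of $(\mathcal{P}_s)$. First I would verify that $J$ is well defined and of class $C^1(X_0,\mathbb{R})$; the only nontrivial piece is the $F$-term, which is handled by combining Theorem \ref{bb} applied with exponent $h(x)$ (where $\overline{p}(x) \le h(x) < p_s^*(x)$) with H\"older's inequality between $L^{r(x)}$ and $L^{r'(x)}$: from $(H_1)$ one has $|F(x,t)| \le g(x)|t|^{q(x)}/q(x)$, and the product $g\,|u|^{q(x)}$ is integrable thanks to $g\in L^{r(x)}(\Omega)$ together with $u \in L^{h(x)}(\Omega)$, since by construction $h(x) = q(x) r'(x)$.

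Next I verify the mountain pass geometry. For $u$ with small norm in $X_0$, the standard modular--norm relations provide a lower bound of order $\|u\|_{X_0}^{p^+}$ for the Gagliardo and $b$-weighted terms, while the continuous embedding $X_0 \hookrightarrow L^{h(x)}(\Omega)$ together with $(H_1)$ yields an upper bound of order $\|u\|_{X_0}^{q^-}$ for the nonlinear term; since $q^- > p^+$, one obtains $\rho,\alpha > 0$ with $J(u) \ge \alpha$ whenever $\|u\|_{X_0} = \rho$. The second geometric condition follows from $(H_2)$, which yields the familiar lower bound $F(x,t) \ge c_1 |t|^\mu - c_2$ on a set of positive measure; fixing any $u_0 \in C_c^\infty(\Omega) \setminus \{0\} \subset X_0$ and evaluating $J(tu_0)$ as $t \to +\infty$, the $-t^\mu$ contribution dominates the $O(t^{p^+})$ terms because $\mu > p^+$, so $J(tu_0) \to -\infty$ and the element $e$ in the mountain pass configuration is obtained.

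The main obstacle, as anticipated in the introduction, is the Palais--Smale condition $(PS)_c$, precisely because of the loss of compactness on the unbounded domain $\Omega$. Given a $(PS)_c$ sequence $(u_n) \subset X_0$, the identity $J(u_n) - \mu^{-1}\langle J'(u_n), u_n\rangle = o(1)(1+\|u_n\|_{X_0})$ combined with $\mu > p^+$ forces boundedness in $X_0$, so by reflexivity $u_n \rightharpoonup u$ along a subsequence. To upgrade this into strong convergence I would exploit the weight condition $g \in L^{r(x)}(\Omega) \cap L^\infty(\Omega)$: for any $\varepsilon > 0$ one chooses $R$ so large that $\int_{\Omega \cap B_R^c} g(x)^{r(x)}\,dx < \varepsilon$, making the tail $\int_{\Omega \cap B_R^c} g(x)|u_n|^{q(x)-1}|u_n - u|\,dx$ uniformly small via H\"older and the continuous embedding, while on $\Omega \cap B_R$ (a bounded set with cone property) the classical local compact embedding for fractional Sobolev spaces with variable exponent gives $u_n \to u$ in $L^{h(x)}(\Omega \cap B_R)$. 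Combined with the $(S_+)$-property of the fractional $p(x,\cdot)$-Laplacian operator (derived from Simon's vector inequalities applied to the Gagliardo kernel, together with the $b$-weighted term), this converts the weak limit into a strong one, $u_n \to u$ in $X_0$. The Mountain Pass Theorem then supplies a critical point at the level $c \ge \alpha > 0$, which is necessarily nontrivial, completing the proof.
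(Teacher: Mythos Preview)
Your proposal is correct and follows essentially the same route as the paper: the energy functional on $X_0$, the mountain pass geometry via $p^+<q^-$ and the Ambrosetti--Rabinowitz condition $(H_2)$, boundedness of $(PS)_c$ sequences through $J(u_n)-\mu^{-1}\langle J'(u_n),u_n\rangle$, and then the key compactness recovery by splitting $\Omega$ into a bounded piece (where the local compact embedding applies) and a tail where $g\in L^{r(x)}(\Omega)$ makes the nonlinear term small, finishing with the $(S_+)$-property of the operator. The only cosmetic difference is that the paper controls $\int_\Omega F(x,u)\,dx$ in the small-norm geometry step via $g\in L^\infty(\Omega)$ and the embedding $X_0\hookrightarrow L^{q(x)}(\Omega)$ rather than through $L^{h(x)}$ and H\"older, but both routes are equivalent here.
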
 
	
\noindent The remainder of this paper is structured as follows. In Section $\ref{21}$, we provide a brief review of key definitions and fundamental properties of generalized Lebesgue spaces $L^{q(x)}$ and fractional Sobolev spaces with variable exponent $W^{s,p(x,y)}$. In Section $\ref{22}$, we establish a continuous embedding theorem for these spaces into variable exponent Lebesgue spaces on unbounded domains. Finally,  in Section $\ref{23}$, we apply the established embedding to prove the existence of nontrivial weak solutions to problem \hyperref[P]{($\mathcal{P}_s$)}.
 \section{Some preliminary results}$\label{21}$
     In this section, we recall some necessary properties of variable exponent spaces. We  begin with  the generalized Lebesgue spaces, for more details, we refer the reader to \cite{ac,ab,KovRak}, and the references therein.\\
     
  \noindent We first give the definition of an open set with cone condition.
  	\begin{definition}$\label{00}$
  		An open $\Omega$ satisfies the cone condition if there exists a
  		finite cone $O$ such that each $x \in \Omega$ is the vertex of a finite cone $O_x$ contained
  		in $\Omega$ and congruent to $O$.\\ Note that $O_x$ need not be obtained from $O$ by parallel
  		translation, but simply by rigid motion.
  	\end{definition}    
   
   \noindent Now, consider the set 
   $$
   C^{+}(\overline{\Omega})=\left\{q\in C(\overline{\Omega})\,:\,q(x)>1\quad \text{ for all } x\in \overline{\Omega}\right\}.
   $$
   For all $q\in C^{+}(\overline{\Omega})$, we define 
   $$
   q^{+}=\sup_{x\in \overline{\Omega}}q(x)\quad \text{and  }q^{-}=\inf_{x\in \overline{\Omega}}q(x).
   $$ 
Given $q\in C^{+}(\overline{\Omega})$, we define the variable exponent Lebesgue spaces as 
	$$
	L^{q(x)}(\Omega)=\left\{u:\Omega\longrightarrow\mathbb{R}\text{ measurable }:\int_{\Omega}|u(x)|^{q(x)}\,dx<+\infty\right\}.
	$$
	This vector space endowed with the Luxemburg norm, which is defined by 
	$$
	\|u\|_{L^{q(x)}(\Omega)}=\inf\left\{\lambda>0:\int_{\Omega}\left|\frac{u(x)}{\lambda}\right|^{q(x)}\,dx\leq 1\right\}
	$$
	is a separable and reflexive Banach space.\\
	
	\noindent Next, let $\hat{q}\in  C^{+}(\overline{\Omega})$ be the conjugate exponent of $q$, that is, $\tfrac{1}{q(x)}+\tfrac{1}{\hat{q}(x)} =1.$ Then we have the following Hölder-inequality:
	\begin{lemma}[Hölder's inequality] If $u\in L^{q(x)}(\Omega)$ and $v\in L^{\hat{q}(x)}(\Omega)$, then
		$$
		\left|\int_{\Omega}u\,v\,dx\right|\leq \left(\tfrac{1}{q^{-}}+\tfrac{1}{\hat{q}^{-}}\right)\|u\|_{L^{q(x)}(\Omega)}\|v\|_{L^{\hat{q}(x)}(\Omega)}\leq 2\|u\|_{L^{q(x)}(\Omega)}\|v\|_{L^{\hat{q}(x)}(\Omega)}.
		$$
	\end{lemma}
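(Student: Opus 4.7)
The plan is to prove the variable-exponent Hölder inequality by the standard Young-type normalization argument, adapted pointwise to the exponent $q(x)$. The only nontrivial ingredient is the scalar Young inequality $ab\leq a^p/p+b^{p'}/p'$ for conjugate $p,p'>1$, applied with $p=q(x)$ and $p'=\hat q(x)$ at each $x$. The degenerate cases $\|u\|_{L^{q(x)}(\Omega)}=0$ or $\|v\|_{L^{\hat q(x)}(\Omega)}=0$ are trivial (one factor vanishes a.e.), so I may assume both norms are strictly positive.

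First I would normalize: set
$$
\tilde u(x)=\frac{u(x)}{\|u\|_{L^{q(x)}(\Omega)}},\qquad \tilde v(x)=\frac{v(x)}{\|v\|_{L^{\hat q(x)}(\Omega)}}.
$$
By the definition of the Luxemburg norm (taking a minimizing sequence of admissible $\lambda$ and passing to the limit, or equivalently invoking the well-known identity $\rho_{q(x)}(u/\|u\|)\leq 1$) one has
$$
\int_{\Omega}|\tilde u(x)|^{q(x)}\,dx\leq 1,\qquad \int_{\Omega}|\tilde v(x)|^{\hat q(x)}\,dx\leq 1.
$$

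Next I would apply the pointwise Young inequality with conjugate exponents $q(x)$ and $\hat q(x)$, which is legitimate since $q(x),\hat q(x)\in(1,+\infty)$ for every $x\in\overline{\Omega}$:
$$
|\tilde u(x)\,\tilde v(x)|\;\leq\;\frac{|\tilde u(x)|^{q(x)}}{q(x)}+\frac{|\tilde v(x)|^{\hat q(x)}}{\hat q(x)}\;\leq\;\frac{|\tilde u(x)|^{q(x)}}{q^{-}}+\frac{|\tilde v(x)|^{\hat q(x)}}{\hat q^{-}}.
$$
Integrating over $\Omega$ and using the two modular estimates above,
$$
\int_{\Omega}|\tilde u\,\tilde v|\,dx\;\leq\;\frac{1}{q^{-}}+\frac{1}{\hat q^{-}}.
$$
Multiplying back by $\|u\|_{L^{q(x)}(\Omega)}\|v\|_{L^{\hat q(x)}(\Omega)}$ and using $|\int uv|\leq \int|uv|$ yields the first stated inequality. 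Finally, since $q^{-},\hat q^{-}\geq 1$ (because $q,\hat q\in C^{+}(\overline{\Omega})$ take values in $(1,+\infty)$), one has $1/q^{-}+1/\hat q^{-}\leq 2$, giving the second inequality.

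I do not foresee a genuine obstacle here; the only subtlety worth flagging is the justification that $\rho_{q(x)}(\tilde u)\leq 1$ on an unbounded $\Omega$. Since the Luxemburg norm is defined as an infimum, one argues with a decreasing sequence $\lambda_n\downarrow\|u\|_{L^{q(x)}(\Omega)}$ for which $\rho_{q(x)}(u/\lambda_n)\leq 1$, and applies Fatou's lemma to pass to the limit; the unboundedness of $\Omega$ does not interfere with this monotone/Fatou step. Everything else is a direct pointwise computation.
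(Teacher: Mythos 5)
Your argument is correct. The paper itself gives no proof of this lemma; it is quoted as a standard fact about variable exponent Lebesgue spaces from the cited references (Diening--Harjulehto--H\"ast\"o--R\r{u}\v{z}i\v{c}ka, Fan--Zhao, Kov\'a\v{c}ik--R\'akosn\'{\i}k), and the normalization-plus-pointwise-Young argument you give is exactly the standard proof found there. All the steps check out: the unit ball property $\varrho_{q(\cdot)}(u/\|u\|)\leq 1$ via Fatou's lemma is the right justification and is unaffected by the unboundedness of $\Omega$, the bound $1/q(x)\leq 1/q^{-}$ is the correct direction, and $q^{-},\hat{q}^{-}\geq 1$ gives the final factor of $2$.
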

 A very important role in manipulation the generalized Lebesgue spaces with variable exponent is played by the modular $\varrho_{q(.)}:L^{q(x)}(\Omega)\longrightarrow\mathbb{R}$ which is defined by 
	$$
	\varrho_{q(.)}(u)=\int_{\Omega}\left|u(x)\right|^{q(x)}\,dx.
	$$
	\begin{proposition}$\label{5}$
	Let $u\in L^{q(x)}(\Omega)$ and $\{u_{k}\}\subset L^{q(x)}(\Omega)$, $k\in \mathbb{N}$, then we have
		\begin{enumerate}
		\item [$i)$] $\|u\|_{L^{q(x)}(\Omega)} >1\, (=1;\,<1) ~~\Longleftrightarrow ~~\varrho_{q(.)}(u)>1\, (=1;\,<1)$,
		\item [$ii)$] $\|u\|_{L^{q(x)}(\Omega)}>1 ~~\Longrightarrow~~ \|u\|_{L^{q(x)}(\Omega)}^{q_{-}} \leq \varrho_{q(.)}(u)\leq \|u\|_{L^{q(x)}(\Omega)}^{q_{+}}$,
		\item [$iii)$] $\|u\|_{L^{q(x)}(\Omega)}<1 ~~\Longrightarrow~~ \|u\|_{L^{q(x)}(\Omega)}^{q_{+}} \leq \varrho_{q(.)}(u)\leq \|u\|_{L^{q(x)}(\Omega)}^{q_{-}}$,
		\item [$iv)$] $\displaystyle\lim_{k \to +\infty}\|u_{k}-u\|_{L^{q(x)}(\Omega)}=0~~\Longleftrightarrow ~~ \displaystyle\lim_{k\rightarrow +\infty} \varrho_{q(.)}(u_{k}-u)=0$.
		\end{enumerate}
	\end{proposition}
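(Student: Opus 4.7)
The plan is to base everything on the scaling behaviour of the modular and the definition of the Luxemburg norm. As a preliminary observation, for any $t>0$ and $u \in L^{q(x)}(\Omega)$, comparing the exponents $q(x)$ pointwise with $q^{-}$ and $q^{+}$ gives
\begin{equation*}
    \min(t^{q^{-}},t^{q^{+}})\,\varrho_{q(\cdot)}(u) \;\leq\; \varrho_{q(\cdot)}(tu) \;\leq\; \max(t^{q^{-}},t^{q^{+}})\,\varrho_{q(\cdot)}(u),
\end{equation*}
which splits into the two useful cases $t\geq 1$ (left side $t^{q^{-}}$, right side $t^{q^{+}}$) and $0<t\leq 1$ (left side $t^{q^{+}}$, right side $t^{q^{-}}$). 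Next I would show that, for $u\neq 0$, the function $\lambda \mapsto \varrho_{q(\cdot)}(u/\lambda)$ is continuous and strictly decreasing on $(0,\infty)$, tending to $+\infty$ as $\lambda\to 0^{+}$ and to $0$ as $\lambda\to +\infty$. Continuity is the one technical point: it follows from dominated convergence, using $q^{+}<\infty$ to produce an integrable majorant $(1+|u|)^{q(x)}$ near any fixed $\lambda_{0}>0$. This monotonicity plus continuity yields that the infimum in the definition of $\|u\|_{L^{q(x)}(\Omega)}$ is attained and, crucially, that $\varrho_{q(\cdot)}(u/\|u\|_{L^{q(x)}(\Omega)})=1$ whenever $\|u\|_{L^{q(x)}(\Omega)}>0$.

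From this last identity, statements $(ii)$ and $(iii)$ are direct: apply the preliminary scaling comparison with $t=1/\|u\|_{L^{q(x)}(\Omega)}$. When $\|u\|_{L^{q(x)}(\Omega)}>1$, one has $t<1$, so $t^{q^{+}}\varrho_{q(\cdot)}(u)\leq 1 \leq t^{q^{-}}\varrho_{q(\cdot)}(u)$, which rearranges to $\|u\|_{L^{q(x)}(\Omega)}^{q^{-}} \leq \varrho_{q(\cdot)}(u) \leq \|u\|_{L^{q(x)}(\Omega)}^{q^{+}}$; the case $\|u\|_{L^{q(x)}(\Omega)}<1$ is symmetric, with $t>1$. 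For $(i)$, I would argue one implication at a time: if $\|u\|_{L^{q(x)}(\Omega)}>1$, $(ii)$ gives $\varrho_{q(\cdot)}(u)\geq \|u\|_{L^{q(x)}(\Omega)}^{q^{-}}>1$; conversely, if $\varrho_{q(\cdot)}(u)>1$ yet $\|u\|_{L^{q(x)}(\Omega)}\leq 1$, applying $(iii)$ (or the equality $\varrho(u/\|u\|)=1$ in the boundary case) gives $\varrho_{q(\cdot)}(u)\leq 1$, a contradiction. The equality case $\|u\|_{L^{q(x)}(\Omega)}=1 \Leftrightarrow \varrho_{q(\cdot)}(u)=1$ follows from the attainment $\varrho_{q(\cdot)}(u/\|u\|_{L^{q(x)}(\Omega)})=1$, and the inequality $<1$ by complementation.

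Finally, for $(iv)$, I would combine $(ii)$ and $(iii)$ applied to the sequence $u_k-u$. If $\|u_k-u\|_{L^{q(x)}(\Omega)}\to 0$, then for $k$ large enough $\|u_k-u\|_{L^{q(x)}(\Omega)}<1$, so $(iii)$ yields $\varrho_{q(\cdot)}(u_k-u)\leq \|u_k-u\|_{L^{q(x)}(\Omega)}^{q^{-}}\to 0$. Conversely, if $\varrho_{q(\cdot)}(u_k-u)\to 0$, then eventually $\varrho_{q(\cdot)}(u_k-u)<1$, and by $(i)$ also $\|u_k-u\|_{L^{q(x)}(\Omega)}<1$, whence $(iii)$ gives $\|u_k-u\|_{L^{q(x)}(\Omega)}^{q^{+}}\leq \varrho_{q(\cdot)}(u_k-u)\to 0$.

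The main obstacle is not a deep one: the whole proof is routine once the attainment relation $\varrho_{q(\cdot)}(u/\|u\|_{L^{q(x)}(\Omega)})=1$ is in hand. The only subtle step is the continuity of $\lambda\mapsto \varrho_{q(\cdot)}(u/\lambda)$, which must be handled by dominated convergence and which relies critically on the assumption $q^{+}<\infty$ built into the definition of $C^{+}(\overline{\Omega})$. Everything else is bookkeeping with the two scaling inequalities.
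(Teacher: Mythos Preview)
Your argument is correct and complete: the scaling inequalities, the attainment relation $\varrho_{q(\cdot)}(u/\|u\|_{L^{q(x)}(\Omega)})=1$ via continuity and strict monotonicity of $\lambda\mapsto\varrho_{q(\cdot)}(u/\lambda)$, and the subsequent deductions of $(i)$--$(iv)$ are all standard and sound. Note, however, that the paper does not actually prove this proposition; it is stated in the preliminaries as a known fact, with implicit reference to the standard sources \cite{ac,ab,KovRak}. So there is no ``paper's own proof'' to compare against --- you have simply supplied the classical argument that those references contain.
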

	\begin{proposition}[\text{\cite[Corollary  3.3.4]{ac}}] \label{ad}
		Let $\alpha,\beta\in C^{+}(\overline{\Omega})$ such that $\alpha(x)\leq \beta(x) \quad \text{for all }x\in \overline{\Omega}$. then, there exists a positive constant $C=C(\alpha,\beta,\Omega)$ such that 
		$$
		\|u\|_{L^{\alpha(x)}(\Omega)}\leq C\|u\|_{L^{\beta(x)}(\Omega)} \qquad  \text{for all }u\in  L^{\beta(x)}(\Omega).
		$$
	\end{proposition}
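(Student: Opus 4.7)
The plan is to prove Proposition \ref{ad} as a direct consequence of the generalized Hölder inequality in variable exponent Lebesgue spaces, essentially following the standard argument behind the cited result \cite[Corollary 3.3.4]{ac}. The idea is to exhibit $|u(x)|^{\alpha(x)}$ as a product paired under Hölder with the constant function $1$, using a suitably chosen variable exponent derived from the ratio $\beta/\alpha$.

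First I would set $\gamma(x)=\beta(x)/\alpha(x)$. Since $\alpha,\beta\in C^{+}(\overline{\Omega})$ and $\alpha(x)\leq\beta(x)$ on $\overline{\Omega}$, the function $\gamma$ is continuous with $\gamma(x)\geq 1$, so its conjugate exponent $\gamma'(x)=\gamma(x)/(\gamma(x)-1)$ is well defined (with the obvious convention at points where $\gamma=1$). Then, applying the variable-exponent Hölder inequality to $|u(x)|^{\alpha(x)}\in L^{\gamma(x)}(\Omega)$ and the constant $1\in L^{\gamma'(x)}(\Omega)$, one gets
$$
\int_{\Omega}|u(x)|^{\alpha(x)}\,dx \;\leq\; 2\,\bigl\|\,|u|^{\alpha(\cdot)}\,\bigr\|_{L^{\gamma(x)}(\Omega)}\,\|1\|_{L^{\gamma'(x)}(\Omega)}.
$$
From the definition of the Luxemburg norm, together with the pointwise identity $\bigl(|u|^{\alpha(x)}\bigr)^{\gamma(x)}=|u|^{\beta(x)}$, the first factor on the right is controlled by a power of $\|u\|_{L^{\beta(x)}(\Omega)}$. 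Converting between modular and norm on both sides via Proposition \ref{5}, one then extracts an estimate of the form $\|u\|_{L^{\alpha(x)}(\Omega)}\leq C\,\|u\|_{L^{\beta(x)}(\Omega)}$, with $C=C(\alpha,\beta,\Omega)$ absorbing both $\|1\|_{L^{\gamma'(x)}(\Omega)}$ and the exponents arising from the norm-modular translation.

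The delicate point is ensuring that the constant $\|1\|_{L^{\gamma'(x)}(\Omega)}$ is finite. On a domain of finite Lebesgue measure this is immediate, but on an unbounded $\Omega$ it is subtle, because $\gamma'$ can blow up at points where $\alpha$ and $\beta$ agree, and because $|\Omega|=\infty$ prevents a naive bound. This is precisely the technicality addressed in the cited reference, so I would close the argument by invoking \cite[Corollary 3.3.4]{ac} for the rigorous verification of the finiteness of that constant in the relevant regime.
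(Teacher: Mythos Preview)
The paper does not supply its own proof; the proposition is simply quoted from \cite[Corollary~3.3.4]{ac}. Your H\"older-inequality reduction is the standard argument behind that corollary and is correct whenever $|\Omega|<\infty$. The gap lies in your handling of the unbounded case: you close by invoking \cite[Corollary~3.3.4]{ac} to certify that $\|1\|_{L^{\gamma'(x)}(\Omega)}<\infty$, but that corollary is the very statement you are proving, so the appeal is circular. Worse, the embedding is in general false on unbounded domains: already for constant exponents $1<\alpha<\beta$ one has $L^{\beta}(\mathbb{R}^N)\not\hookrightarrow L^{\alpha}(\mathbb{R}^N)$. The ``delicate point'' you flag is therefore not a removable technicality but an actual obstruction, and no argument can produce the constant $C$ in that generality.

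This does no damage to the paper, because Proposition~\ref{ad} is only ever applied on the bounded balls $B_k$ (and on $B_k\times B_k$ equipped with the finite measure $\nu$) inside the proof of Theorem~\ref{bb}; in those settings your argument goes through verbatim. The clean fix is to add the hypothesis $|\Omega|<\infty$ (or, equivalently, that the underlying measure is finite) to the statement, after which your H\"older proof is complete and no circular citation is needed.
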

	\begin{proposition}[\cite{Fan}]\label{11}
		If $|u|^{q(x)}\in L^{\frac{h(x)}{q(x)}}(\Omega)$, where $q,h \in C^{+}(\overline{\Omega})$, $q(x)\leq h(x)$, then 
		$u\in  L^{h(x)}(\Omega) $ and there is a number 
		$\overline{q}\in [q^{-},q^{+}]$ such that $\||u|^{q(x)}\|_{L^{\frac{h(x)}{q(x)}}(\Omega)}=\left(\|u\|_{L^{h(x)}(\Omega)}\right)^{\overline{q}}$.
	\end{proposition}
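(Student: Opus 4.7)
The plan is first to establish $u \in L^{h(x)}(\Omega)$, which is immediate from
$$\int_\Omega |u(x)|^{h(x)}\,dx = \int_\Omega \bigl(|u(x)|^{q(x)}\bigr)^{h(x)/q(x)}\,dx < \infty,$$
using the hypothesis $|u|^{q(x)} \in L^{h(x)/q(x)}(\Omega)$. The substantive part is the norm identity. I may assume $u \not\equiv 0$, since otherwise both norms vanish and the identity is trivial. Write $\lambda := \|u\|_{L^{h(x)}(\Omega)} > 0$ and $\mu := \||u|^{q(x)}\|_{L^{h(x)/q(x)}(\Omega)} > 0$.

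Unfolding the Luxemburg norms, the maps
$$\tau \mapsto \int_\Omega \frac{|u|^{h(x)}}{\tau^{h(x)}}\,dx \qquad \text{and} \qquad \sigma \mapsto \int_\Omega \frac{|u|^{h(x)}}{\sigma^{h(x)/q(x)}}\,dx$$
are continuous and strictly decreasing on $(0,\infty)$ (from $+\infty$ to $0$, by monotone convergence and $u \in L^{h(x)}$), so the infima defining $\lambda$ and $\mu$ are attained as equalities:
$$\int_\Omega \frac{|u|^{h(x)}}{\lambda^{h(x)}}\,dx = 1 = \int_\Omega \frac{|u|^{h(x)}}{\mu^{h(x)/q(x)}}\,dx.$$
With the ansatz $\mu = \lambda^{\overline{q}}$, the second identity becomes $G(\overline{q}) = 1$, where
$$G(t) := \int_\Omega |u(x)|^{h(x)} \lambda^{-t\,h(x)/q(x)}\,dx, \qquad t \in [q^-,q^+],$$
is continuous in $t$ by dominated convergence. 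The task therefore reduces to finding $\overline{q} \in [q^-,q^+]$ with $G(\overline{q}) = 1$.

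I would close the argument by splitting cases on $\lambda$. The case $\lambda = 1$ is immediate, since $G$ is then constant and equals $\int |u|^{h(x)}\,dx = 1$, so any $\overline{q}$ works. For $\lambda \ne 1$, the elementary pointwise inequalities
$$\frac{q^-}{q(x)} \le 1 \le \frac{q^+}{q(x)} \quad \text{for all } x \in \overline{\Omega},$$
combined with the monotonicity of $t \mapsto \lambda^{-t}$ (decreasing if $\lambda > 1$, increasing if $\lambda < 1$), yield $G(q^-) \ge 1 \ge G(q^+)$ in the first sub-case and the reversed chain in the second. In either sub-case the intermediate value theorem produces $\overline{q} \in [q^-,q^+]$ with $G(\overline{q}) = 1$, and this value of $\mu = \lambda^{\overline{q}}$ coincides with the true Luxemburg norm by the strict monotonicity recorded above. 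The only delicate point is this sign-aware case distinction on $\lambda$; apart from that, the argument is a one-line application of the intermediate value theorem.
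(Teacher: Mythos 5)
Your argument is correct. Note that the paper itself gives no proof of this proposition --- it is quoted from \cite{Fan} --- and your argument is exactly the standard one from the variable-exponent literature: reduce both Luxemburg norms to the unit-ball property $\varrho(u/\|u\|)=1$ (valid here because the modulars are continuous and strictly decreasing in the scaling parameter, using $u\not\equiv 0$ and $h^{+}<\infty$), then locate $\overline{q}$ by the intermediate value theorem applied to $G(t)=\int_{\Omega}|u|^{h(x)}\lambda^{-t h(x)/q(x)}\,dx$, with the sign of $\log\lambda$ dictating which endpoint of $[q^{-},q^{+}]$ gives $G\ge 1$ and which gives $G\le 1$. The only points worth making explicit are the domination used for continuity of $G$ (namely $\lambda^{-t h(x)/q(x)}\le\max(\lambda,\lambda^{-1})^{q^{+}h^{+}/q^{-}}$ for $t\in[q^{-},q^{+}]$, which requires the boundedness of the exponents) and the trivial case $u\equiv 0$, both of which you address.
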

Now, we turn to the fractional case of variable exponent Sobolev spaces, as introduced in \cite{ABS2, Rossi, Kaufmann}. While their definition is provided in the introduction, we summarize here some of their fundamental properties. \\
		Let $p:\overline{\Omega}\times\overline{\Omega}\longrightarrow(1,+\infty)$ be a continuous function satisfies \eqref{n} and \eqref{l}, given $s\in (0,1)$. For any $u\in W$, we define the modular $\varrho_{p(.,.)}:W\longrightarrow\mathbb{R}$ by 
		$$
		\varrho_{p(.,.)}(u)=\int_{\Omega}\int_{\Omega}\frac{|u(x)-u(y)|^{p(x,y)}}{|x-y|^{N+sp(x,y)}}\,dxdy+\int_{\Omega}|u(x)|^{\overline{p}(x)}\,dx.
		$$
	The associated norm is expressed as  follows
		$$
		\|u\|_{\varrho_{p(.,.)}}=inf\left\{\lambda>0:\varrho_{p(.,.)}\left(\frac{u}{\lambda}\right)\leq 1\right\}.
		$$
	\begin{remark} \text{}
		\begin{itemize}
			\item [$(i)$] It is easy to see that $	\|.\|_{\varrho_{p(.,.)}}$ is a norm on $W$ which is equivalent to the norm $\|.\|_{W}$,
			\item [$(ii)$] $	\varrho_{p(.,.)}$ also check the results of Proposition $\ref{5}$.
		\end{itemize}
	\end{remark}
  Let $W_{0}$ denote the closure of $ C_{0}^{\infty}(\Omega)$ in $W$, that is, $W_{0}=\overline{C_{0}^{\infty}(\Omega)}^{\|.\|_{W}}$. We could also get the following properties:
	\begin{lemma}[\cite{ABS2,e}]
	Let $p:\overline{\Omega}\times\overline{\Omega}\longrightarrow(1,+\infty)$ be a continuous function and $s\in (0,1)$. For any $u\in W_{0}$, we have
\begin{itemize}
	\item [$i)$] $1\leq [u]_{s,p(x,y)}~~\Longrightarrow ~~ [u]_{s,p(x,y)}^{p^{-}} \leq \int_{\Omega}\int_{\Omega}\tfrac{|u(x)-u(y)|^{p(x,y)}}{|x-y|^{N+sp(x,y)}}\,dxdy\leq [u]_{s,p(x,y)}^{p^{+}}$,
	\item [$ii)$] $ [u]_{s,p(x,y)} \leq 1~~\Longrightarrow ~~  [u]_{s,p(x,y)}^{p^{+}} \leq \int_{\Omega}\int_{\Omega}\tfrac{|u(x)-u(y)|^{p(x,y)}}{|x-y|^{N+sp(x,y)}}\,dxdy\leq [u]_{s,p(x,y)}^{p^{-}}$. 
\end{itemize}
	\end{lemma}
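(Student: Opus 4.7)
The plan is the standard modular--Luxemburg comparison for variable-exponent Gagliardo seminorms. Set $\lambda_0 := [u]_{s,p(x,y)}$, denote by $J(u)$ the double integral appearing on the right-hand side of the conclusion, and introduce the auxiliary function
\[
\phi(\lambda) := \int_\Omega \int_\Omega \frac{|u(x)-u(y)|^{p(x,y)}}{\lambda^{p(x,y)}|x-y|^{N+sp(x,y)}}\,dx\,dy, \qquad \lambda > 0.
\]
The first step is to observe that for $u \in W_0 \subset W$ the very definition of $W$ gives $\phi(\lambda)<\infty$ for $\lambda$ large enough, and a dominated convergence argument then shows that $\phi$ is continuous and strictly decreasing on the range where it is finite. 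Combined with the definition $\lambda_0 = \inf\{\lambda > 0 : \phi(\lambda) < 1\}$, continuity forces $\phi(\lambda_0) = 1$ whenever $\lambda_0 \in (0,\infty)$. The degenerate case $\lambda_0 = 0$ would, by monotone convergence as $\lambda\to 0^+$, force $J(u)=0$, so (ii) holds trivially.

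The remainder is pointwise algebra. Since $p^- \leq p(x,y) \leq p^+$, the map $t \mapsto \lambda_0^{t}$ sandwiches $\lambda_0^{p(x,y)}$: for $\lambda_0 \geq 1$ one has $\lambda_0^{p^-} \leq \lambda_0^{p(x,y)} \leq \lambda_0^{p^+}$, and for $\lambda_0 \leq 1$ the inequality reverses. Inverting (which flips the direction) and integrating the nonnegative kernel $|u(x)-u(y)|^{p(x,y)}/|x-y|^{N+sp(x,y)}$ against these pointwise bounds yields
\[
\frac{J(u)}{\lambda_0^{p^+}} \leq \phi(\lambda_0) \leq \frac{J(u)}{\lambda_0^{p^-}} \quad\text{if } \lambda_0 \geq 1, \qquad \frac{J(u)}{\lambda_0^{p^-}} \leq \phi(\lambda_0) \leq \frac{J(u)}{\lambda_0^{p^+}} \quad\text{if } \lambda_0 \leq 1.
\]
Substituting $\phi(\lambda_0)=1$ and rearranging produces (i) and (ii) respectively.

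The only nontrivial ingredient is the continuity/monotonicity of $\phi$ that secures $\phi(\lambda_0)=1$. This is a routine dominated convergence argument (on any compact subinterval of $(0,\infty)$ the integrand of $\phi$ is uniformly dominated by an integrable multiple of $|u(x)-u(y)|^{p(x,y)}/|x-y|^{N+sp(x,y)}$), but it is the step that requires actual care; every other piece reduces to the monotonicity of $t \mapsto \lambda_0^{t}$ together with the sign of $\lambda_0 - 1$.
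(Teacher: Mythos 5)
Your argument is correct and is exactly the standard modular--Luxemburg-norm comparison; the paper itself states this lemma only by citation to \cite{ABS2,e} and gives no proof, and those references argue the same way (show $\phi([u]_{s,p(x,y)})=1$ by continuity and strict monotonicity of the modular in $\lambda$, then sandwich $\lambda_0^{p(x,y)}$ between $\lambda_0^{p^-}$ and $\lambda_0^{p^+}$ according to the sign of $\lambda_0-1$). Your handling of the two genuine technical points --- that finiteness of $\phi$ at one $\lambda$ propagates to all of $(0,\infty)$ so dominated convergence applies, and that $\lambda_0=0$ forces the integral to vanish --- is what makes the proof complete.
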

	\begin{theorem}[\cite{ABS2}]\label{ba}
		Let $\Omega$ be a smooth bounded domain in $\mathbb{R}^{N}$ and let $s\in (0,1)$. Let $p:\overline{\Omega}\times \overline{\Omega}\longrightarrow(1,+\infty)$ be a continuous variable exponent with $sp(x,y)<N$ for all $(x,y)\in \overline{\Omega}\times \overline{\Omega}$. Let $(\ref{n})$ and $(\ref{l})$ be satified. Assume that $r:\overline{\Omega}\longrightarrow(1,+\infty)$ is continuous variable exponent such that 
		$$
		p^{*}_{s}(x)=\tfrac{N\overline{p}(x)}{N-s\overline{p}(x)}>r(x)\geq r^{-}=\inf_{x\in \overline{\Omega}}
		r(x)>1,
		$$
		for all $x\in \overline{\Omega}$. Then, there exists a constant $C=C(N,s,p,r,\Omega)>0$ such that, for any $u\in W$, 
		$$
		\|u\|_{L^{r(x)}(\Omega)}\leq C\|u\|_{W}.
		$$
		Thus, the space $W$ is continuously embedded in $L^{r(x)}(\Omega)$ for any $r\in (1,p_{s}^{*})$. Moreover, this embedding is compact.
	\end{theorem}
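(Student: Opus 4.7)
The plan is to apply the Mountain Pass Theorem of Ambrosetti--Rabinowitz to the energy functional $J:X_0\to\mathbb{R}$ naturally associated with $(\mathcal{P}_s)$, namely
$$J(u)=\int_{Q}\frac{|u(x)-u(y)|^{p(x,y)}}{p(x,y)|x-y|^{N+sp(x,y)}}\,dxdy+\int_{\Omega}\frac{b(x)}{\overline{p}(x)}|u|^{\overline{p}(x)}\,dx-\int_{\Omega}F(x,u)\,dx,$$
so that critical points of $J$ coincide exactly with weak solutions of $(\mathcal{P}_s)$. First I would verify that $J\in C^{1}(X_0,\mathbb{R})$: the differentiability of the two modular terms is standard, while for the nonlinearity $(H_{1})$, the H\"older inequality in variable exponent Lebesgue spaces, and the continuous embedding $W\hookrightarrow L^{h(x)}(\Omega)$ supplied by Theorem~\ref{bb} (which applies because $\overline{p}(x)\le h(x)<p_{s}^{*}(x)$) guarantee that the nonlinear term is well defined and G\^ateaux-differentiable with continuous derivative.

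Next I verify the geometric hypotheses. For small $\|u\|_{X_0}$, using $(H_{1})$, H\"older, and Proposition~\ref{11},
$$\int_{\Omega}F(x,u)\,dx\le\|g\|_{L^{r(x)}(\Omega)}\,\bigl\||u|^{q(x)}\bigr\|_{L^{r'(x)}(\Omega)}\le C\,\|u\|_{L^{h(x)}(\Omega)}^{\overline{q}}\le C'\,\|u\|_{X_0}^{\overline{q}},$$
with $\overline{q}\in[q^{-},q^{+}]$; since $q^{-}>p^{+}$, combining this with the lower modular bounds from Proposition~\ref{5} on the leading terms gives $J(u)\ge\alpha>0$ on some sphere $\{\|u\|_{X_0}=\rho\}$. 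For the descending ray, fix $u_{0}\in X_{0}$ with $u_{0}\not\equiv 0$; integrating $(H_{2})$ yields $F(x,t)\ge c_{1}|t|^{\mu}-c_{2}$ for all $x$, and since $\mu>p^{+}$, $J(tu_{0})\to-\infty$ as $t\to+\infty$.

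The heart of the proof, and the principal obstacle, is the verification of the Palais--Smale condition, since the embedding from Theorem~\ref{bb} is merely continuous and the Rellich--Kondrachov type compactness fails on the unbounded $\Omega$. Let $(u_{n})$ be a $(PS)_{c}$-sequence. The standard calculation $J(u_{n})-\tfrac{1}{\mu}\langle J'(u_{n}),u_{n}\rangle$ combined with $(H_{2})$ and Proposition~\ref{5}(ii) yields boundedness of $(u_{n})$ in $X_{0}$, so, up to a subsequence, $u_{n}\rightharpoonup u$ in $X_{0}$ and $u_{n}\to u$ a.e. in $\Omega$. To recover strong convergence I would exploit the integrability of the weight $g\in L^{r(x)}(\Omega)$: given $\varepsilon>0$ choose $R$ large with $\int_{\Omega\setminus B_{R}}|g|^{r(x)}\,dx<\varepsilon$; on the bounded piece $\Omega\cap B_{R}$ the compact embedding of Theorem~\ref{ba} yields strong convergence in $L^{h(x)}(\Omega\cap B_{R})$, while on $\Omega\setminus B_{R}$ H\"older's inequality with the small $L^{r(x)}$-norm of $g$ and the uniform bound on $\|u_{n}\|_{L^{h(x)}(\Omega)}$ makes the corresponding integral arbitrarily small. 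Letting $\varepsilon\to 0$ gives
$$\int_{\Omega}\bigl(f(x,u_{n})-f(x,u)\bigr)(u_{n}-u)\,dx\longrightarrow 0.$$

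Inserting this into $\langle J'(u_{n}),u_{n}-u\rangle\to 0$ reduces the remaining estimate to the two modular terms, for which the classical monotonicity inequality for the fractional $p(x,\cdot)$-Laplacian (together with its elementary analogue for the $b(x)|u|^{\overline{p}(x)-2}u$ term) forces the Gagliardo seminorm and $L^{\overline{p}(x)}$ moduli to converge, and thence $u_{n}\to u$ strongly in $X_{0}$ via the $(S_{+})$-property. The Palais--Smale condition therefore holds at every level, and the Mountain Pass Theorem delivers a critical point at the min-max level $c\ge\alpha>0$, which is the desired nontrivial weak solution of $(\mathcal{P}_{s})$. The decisive technical ingredient is the truncation-by-$R$ argument in the $(PS)$ verification, which converts the hypothesis $g\in L^{r(x)}(\Omega)\cap L^{\infty}(\Omega)$ into the compactness that the unbounded domain otherwise destroys.
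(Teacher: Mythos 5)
Your proposal does not address the statement you were asked to prove. The statement is Theorem~\ref{ba}: the continuous and compact embedding of $W^{s,p(x,y)}(\Omega)$ into $L^{r(x)}(\Omega)$ for a \emph{smooth bounded} domain $\Omega$ and a subcritical exponent $r(\cdot)$. This is a functional-analytic embedding result, quoted in the paper from \cite{ABS2}; a proof of it would have to establish the inequality $\|u\|_{L^{r(x)}(\Omega)}\leq C\|u\|_{W}$ (typically by localizing the exponent, invoking the constant-exponent fractional Sobolev embedding of Theorem~\ref{24} on small pieces via a partition of unity, and patching) and then the compactness (typically via a Rellich--Kondrachov/Kolmogorov-type argument or by interpolating against a strictly subcritical exponent). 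Your text contains none of this.

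What you have written instead is a proof sketch of Theorem~\ref{3} --- the mountain-pass existence argument for the nonlocal problem $(\mathcal{P}_{s})$ on an unbounded domain. Worse, your argument explicitly \emph{invokes} Theorem~\ref{ba} as a tool (in the Palais--Smale verification, where you use ``the compact embedding of Theorem~\ref{ba}'' on the bounded piece $\Omega\cap B_{R}$), so even reinterpreted charitably it is circular as a proof of the statement at hand. You need to start over and prove the embedding inequality and its compactness for bounded $\Omega$ directly; the variational machinery (energy functional, mountain-pass geometry, $(S_{+})$-property) is irrelevant to that task.
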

 The above embedding theorem is the generalization of the following result in the exponent constant  case. 
 	\begin{theorem}[\cite{DiNezza}]\label{24}
 		Let $s\in (0,1)$ and $p\in [1,+\infty)$ such that $sp<N $. Let $\Omega \subset \mathbb{R}^{N}$ be an extension domain for $W^{s,p}$. Then there exists a positive constant $C=C(N,p,s,\Omega)$ such that, for any $u\in W^{s,p}(\Omega)$, we have 
 		$$
 		\|u\|_{L^{q}(\Omega)}\leq C\|u\|_{W^{s,p}(\Omega)},
 		$$
 		for any $q\in [p,p^{*}]$, i.e. the space $W^{s,p}(\Omega)$ is continuously embedded in $ L^{q}(\Omega)$ for any $q\in  [p,p^{*}]$.
 	\end{theorem}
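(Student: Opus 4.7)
The strategy is to apply the Mountain Pass Theorem of Ambrosetti–Rabinowitz to the energy functional naturally associated with $(\mathcal{P}_{s})$, namely
$$
J(u)=\int_{Q}\frac{|u(x)-u(y)|^{p(x,y)}}{p(x,y)\,|x-y|^{N+sp(x,y)}}\,dxdy+\int_{\Omega}\frac{b(x)}{\overline{p}(x)}|u(x)|^{\overline{p}(x)}dx-\int_{\Omega}F(x,u)\,dx,
$$
defined on the solution space $X_{0}$. My first task would be to verify that $J\in C^{1}(X_{0},\mathbb{R})$, with Fr\'echet derivative coinciding with the weak formulation given in the introduction, so that critical points of $J$ are exactly the weak solutions of $(\mathcal{P}_{s})$. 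The two leading terms are handled by standard arguments on modular functionals; the nonlinear part uses $(H_{1})$, H\"older's inequality with exponents $r(x)$ and $\frac{h(x)}{q(x)}$, together with the embedding $X_{0}\hookrightarrow L^{h(x)}(\Omega)$ supplied by Theorem \ref{bb} (applied to $h$ in place of $q$; note $h<p_{s}^{*}$ by hypothesis) to control $\int_{\Omega}f(x,u)\phi\,dx$.

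For the mountain pass geometry I would argue as follows. Clearly $J(0)=0$. For small $\|u\|_{X_{0}}=\rho<1$, the modular estimates (the seminorm analogue of Proposition \ref{5}) bound the "kinetic" part from below by $\frac{1}{p^{+}}\|u\|_{X_{0}}^{p^{+}}$, while the nonlinear part is estimated by $(H_{1})$, H\"older, Proposition \ref{11} and Theorem \ref{bb} as $\leq C\|g\|_{L^{r(x)}(\Omega)}\|u\|_{X_{0}}^{\overline{q}}$ with $\overline{q}\in[q^{-},q^{+}]$; since $q^{-}>p^{+}$, smallness of $\rho$ yields $J(u)\geq\alpha>0$. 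To produce $e$ with $J(e)<0$, I invoke $(H_{2})$ which, in the classical manner, forces $F(x,t)\geq c_{1}|t|^{\mu}-c_{2}$ uniformly on a fixed compact subset of $\Omega$. Picking $u_{0}\in C_{0}^{\infty}(\Omega)$, $u_{0}\not\equiv 0$, and letting $t\to\infty$, the growth $t^{\mu}$ with $\mu>p^{+}$ beats the $t^{p^{+}}$ contribution of the kinetic parts, so $J(tu_{0})\to-\infty$.

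The critical and most delicate step is verifying the Palais–Smale condition, and here \emph{the unboundedness of} $\Omega$ \emph{is the main obstacle}, since Theorem \ref{bb} gives only a continuous embedding. Given a PS sequence $(u_{n})$ with $J(u_{n})\to c$ and $J'(u_{n})\to 0$, I would first extract a bound on $\|u_{n}\|_{X_{0}}$ in the standard way from the combination $\mu J(u_{n})-\langle J'(u_{n}),u_{n}\rangle$ using $(H_{2})$ and the modular inequalities; reflexivity then gives $u_{n}\rightharpoonup u$ in $X_{0}$ along a subsequence. To recover strong convergence, I would use the fact that $g\in L^{r(x)}(\Omega)$ supplies decay at infinity: for any $\varepsilon>0$ choose $R$ so that $\|g\|_{L^{r(x)}(\Omega\setminus B_{R})}<\varepsilon$. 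On $\Omega\cap B_{R}$, which is a bounded subdomain of $\Omega$, the compact embedding of Theorem \ref{ba} yields $u_{n}\to u$ strongly in $L^{h(x)}(\Omega\cap B_{R})$, and $(H_{1})$ with H\"older gives $\int_{\Omega\cap B_{R}}f(x,u_{n})(u_{n}-u)\to 0$. On $\Omega\setminus B_{R}$, the same H\"older step, combined with the uniform $L^{h(x)}$-bound of $(u_{n}-u)$, is controlled by $\|g\|_{L^{r(x)}(\Omega\setminus B_{R})}<\varepsilon$. Passing these estimates into the relation $\langle J'(u_{n})-J'(u),u_{n}-u\rangle\to 0$, and using the Simon-type monotonicity of the fractional $p(x,\cdot)$-operator together with the convex term $|u|^{\overline{p}(x)-2}u$, forces $\varrho_{p(.,.)}(u_{n}-u)\to 0$ and hence $u_{n}\to u$ strongly in $X_{0}$.

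Once geometry and the PS condition are in place, the Mountain Pass Theorem yields a critical point $u\in X_{0}$ of $J$ with $J(u)\geq\alpha>0=J(0)$; by the $C^{1}$ identification, $u$ is a nontrivial weak solution of $(\mathcal{P}_{s})$, completing the proof. The technical heart of the argument, as anticipated, is the splitting estimate via $g\in L^{r(x)}(\Omega)$ that compensates for the failure of global compact embeddings on unbounded $\Omega$.
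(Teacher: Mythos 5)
Your proposal does not address the statement at all. The statement to be proved is Theorem \ref{24}, the classical constant-exponent fractional Sobolev embedding $W^{s,p}(\Omega)\hookrightarrow L^{q}(\Omega)$ for $q\in[p,p^{*}]$ on an extension domain, quoted from Di Nezza--Palatucci--Valdinoci. What you have written is instead a mountain-pass existence argument for the nonlocal problem $(\mathcal{P}_{s})$ --- i.e.\ a sketch of the proof of Theorem \ref{3}. Nothing in your text concerns the Gagliardo seminorm with a \emph{constant} exponent $p$, the extension property of $\Omega$, the critical exponent $p^{*}=Np/(N-sp)$, or the interpolation between $L^{p}$ and $L^{p^{*}}$ that the stated inequality requires.

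For the record, the paper itself offers no proof of Theorem \ref{24}: it is cited as a known result (Theorem 6.7 / 6.9 of the Hitchhiker's guide) and used as a black box inside the proof of Theorem \ref{bb}. A correct proof would proceed roughly as follows: extend $u$ to $W^{s,p}(\mathbb{R}^{N})$ using the extension property, apply the fractional Sobolev inequality on $\mathbb{R}^{N}$ (Theorem \ref{bc}, established via a Besov-type or dyadic decomposition argument) to control $\|u\|_{L^{p^{*}}}$ by the Gagliardo seminorm, and then interpolate between $L^{p}(\Omega)$ and $L^{p^{*}}(\Omega)$ via H\"older's inequality to cover all intermediate $q\in[p,p^{*}]$. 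None of these steps appears in your proposal, so it cannot be accepted as a proof of the stated theorem.
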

 \noindent 	Additionally, we have an embedding result in the whole space $\mathbb{R}^N$.
	\begin{theorem}[\cite{DiNezza}]\label{bc}
		Let $s\in (0,1)$ and $p\in [1,+\infty)$ such that $sp<N $. Then, there exists a positive constant $C=C(N,p,s)$ such that, for any measurable and compactly supported function $u:\mathbb{R}^{N}\longrightarrow\mathbb{R}$, we have 
		$$
		\|u\|_{L^{p^{*}}(\mathbb{R}^{N})}\leq C\left(\int_{\mathbb{R}^{N}}\int_{\mathbb{R}^{N}}\tfrac{|u(x)-u(y)|^{p}}{|x-y|^{N+sp}}\,dxdy\right)^{\frac{1}{p}},
		$$
		where 
		$$
		p^{*}=p^{*}(N,s)=\tfrac{Np}{N-sp}
		$$
		is the so-called the "fractional critical exponent".
	\end{theorem}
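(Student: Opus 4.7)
The plan is to prove the fractional Sobolev embedding by a two-step strategy: first establish the endpoint $p = 1$ case via the fractional isoperimetric inequality combined with the coarea formula, and then bootstrap to general $1 < p < N/s$ using the nonlinear test function $v = |u|^\gamma$ with $\gamma = p(N-s)/(N-sp) > 1$, chosen so that $\gamma \cdot N/(N-s) = p^*$.

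For the endpoint $p = 1$, I would use the layer-cake decomposition $|u|(x) = \int_0^\infty \chi_{A_t}(x)\, dt$, where $A_t = \{|u| > t\}$. Minkowski's integral inequality in $L^{N/(N-s)}$ yields $\|u\|_{L^{N/(N-s)}(\mathbb{R}^N)} \leq \int_0^\infty |A_t|^{(N-s)/N}\, dt$. The fractional isoperimetric inequality $|A|^{(N-s)/N} \leq C P_s(A)$ on each super-level set, with $P_s(A) = \int_A \int_{A^c} |x-y|^{-(N+s)}\, dx\, dy$ the $s$-perimeter, combined with the Visintin-type coarea identity $[u]_{W^{s,1}(\mathbb{R}^N)} = \int_0^\infty P_s(A_t)\, dt$, then delivers the bound $\|u\|_{L^{N/(N-s)}(\mathbb{R}^N)} \leq C\, [u]_{W^{s,1}(\mathbb{R}^N)}$. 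Compact support of $u$ guarantees that every quantity in the chain is finite.

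For the bootstrap to $p > 1$, applying the $p = 1$ inequality to $v = |u|^\gamma$ (still compactly supported) yields $\|u\|_{L^{p^*}(\mathbb{R}^N)}^\gamma = \|v\|_{L^{N/(N-s)}(\mathbb{R}^N)} \leq C [v]_{W^{s,1}(\mathbb{R}^N)}$. The elementary inequality $\bigl||a|^\gamma - |b|^\gamma\bigr| \leq \gamma \bigl(|a|^{\gamma-1} + |b|^{\gamma-1}\bigr)|a - b|$ together with Fubini reduces $[v]_{W^{s,1}}$ to an integral of the form $\int |u(x)|^{\gamma-1} F(x)\, dx$, where $F(x) = \int |u(x) - u(y)|\,|x-y|^{-(N+s)}\, dy$. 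A Hölder estimate with the pair $(p', p)$, using the identity $(\gamma-1)p' = p^*$ (which follows directly from $\gamma = p(N-s)/(N-sp)$), gives $[v]_{W^{s,1}} \leq C\gamma\, \|u\|_{L^{p^*}}^{\gamma-1}\, \|F\|_{L^p}$, and a fractional maximal-function bound relating $F$ to $(-\Delta)^{s/2} u$ produces $\|F\|_{L^p} \leq C [u]_{s,p}$. Dividing out $\|u\|_{L^{p^*}}^{\gamma-1}$, which is legitimate after first running the argument on truncations $u_M = \mathrm{sgn}(u)\min(|u|, M)$ and passing to the limit via Fatou's lemma, isolates the target norm and completes the proof.

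The main obstacles I anticipate are twofold. First, the fractional isoperimetric inequality $|A|^{(N-s)/N} \leq C P_s(A)$ itself is a nontrivial ingredient, typically established via symmetric decreasing rearrangement or as a corollary of the Hardy--Littlewood--Sobolev inequality applied to $\chi_A$ paired with itself. Second, the comparison $\|F\|_{L^p} \leq C [u]_{s,p}$ in the bootstrap is clean for $p = 2$ through Plancherel's identity, but for $p \neq 2$ it hides the delicate Besov-versus-Triebel--Lizorkin distinction; the naive exponent splitting $|x-y|^{-(N+s)} = |x-y|^{-(N+sp)/p}\cdot |x-y|^{-N/p'}$ fails because the weight $|x-y|^{-N}$ is not integrable in either single variable, so one must reorganize via Fubini and exploit compact support or a Littlewood--Paley decomposition to recover the estimate.
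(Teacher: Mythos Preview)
The paper does not supply its own proof of this statement: the theorem is quoted from \cite{DiNezza} as background in the preliminaries section, with no argument given. There is therefore nothing in the paper to compare your proposal against; any comparison would have to be with the original source.

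On its own merits, your two-step outline---endpoint $p=1$ via the fractional coarea formula plus the fractional isoperimetric inequality, then a Moser-type bootstrap with $v=|u|^\gamma$---is a classical and legitimate strategy, and the $p=1$ portion is sound. The bootstrap, however, contains a gap that you identify but do not close. After the pointwise bound $\bigl||a|^\gamma-|b|^\gamma\bigr|\le\gamma\bigl(|a|^{\gamma-1}+|b|^{\gamma-1}\bigr)|a-b|$ you reduce matters to controlling $\int|u(x)|^{\gamma-1}F(x)\,dx$ with $F(x)=\int|u(x)-u(y)|\,|x-y|^{-(N+s)}\,dy$, and then assert $\|F\|_{L^p}\le C[u]_{s,p}$. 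That last bound is not available by elementary means: as you yourself note, the natural H\"older splitting produces the non-integrable factor $|x-y|^{-N}$, and neither ``reorganizing via Fubini'' nor compact support repairs this, since the obstruction is the diagonal singularity at $x=y$, not the behaviour at infinity. Invoking a Littlewood--Paley or Triebel--Lizorkin comparison would work but imports machinery at least as heavy as the theorem you are proving. The argument in \cite{DiNezza} avoids the function $F$ altogether: it carries a dyadic level-set decomposition, based on $a_k=|\{|u|>2^k\}|$, through the entire proof and applies a discrete H\"older inequality to the resulting sums. You would need to implement something of that kind, or an equivalent device, to make the case $p>1$ go through.
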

The study some classes of nonlocal problems involving the fractional $p(x,.)$-Laplacian operator with Dirichlet boundary  condition $u=0$ in $\mathbb{R}^{N}\backslash\Omega$ via variational methods, this requires working within a newly adapted form of  fractional Sobolev space with variable exponent defined as follows (see \cite{ABS8}):
$$
X=W^{s, p(x, y)}(Q):=\left\{\begin{array}{c}
u: \mathbb{R}^N \longrightarrow \mathbb{R} \text { measurable such that } u_{\mid \Omega} \in L^{\overline{p}(x)}(\Omega) \text { with } \\
\int_Q \tfrac{|u(x)-u(y)|^{p(x, y)}}{\lambda^{p(x, y)}|x-y|^{s p(x, y)+N}} d x d y<+\infty, \text { for some } \lambda>0
\end{array}\right\},
$$
	where $Q=\mathbb{R}^{2N}\backslash(\Omega^{c}\times\Omega^{c})$ with $\Omega^{c}=\mathbb{R}^{N}\backslash\Omega$ and $p:\overline{Q}\longrightarrow(1,+\infty)$ satifies $(\ref{n})$ and $(\ref{l})$ on $\overline{Q}$. The space $X$ is endowed with the following norm
	$$
	\|u\|_{X}=\|u\|_{L^{\overline{p}(x)}(\Omega)}+[u]_{X},
	$$
	where $[\cdot]_{X}$ is a Gagliardo semi norm with variable exponent defined by 
	$$
	[u]_{X}=[u]_{s,p(x,y)}(Q)=\inf\left\{\lambda>0:\int_{Q}\frac{|u(x)-u(y)|^{p(x,y)}}{\lambda^{p(x,y)}|x-y|^{N+sp(x,y)}}\,dxdy<1\right\}.
	$$
	Similar to the space $(W,\|.\|_{W})$, we have that $(X,\|.\|_{X})$ is a separable reflexive Banach space.
		\begin{definition}
		Let $p:\overline{Q}\longrightarrow(1,+\infty)$ be a continuous function and $s\in (0,1)$. For any $u\in X$, we define the modular $\rho_{_{X}}:X\longrightarrow\mathbb{R}$ by 
		$$
		\rho_{_{X}}(u)=\int_{Q}\frac{|u(x)-u(y)|^{p(x,y)}}{|x-y|^{N+sp(x,y)}}\,dxdy+\int_{\Omega}|u(x)|^{\overline{p}(x)}\,dx
		$$
		and
		$$
		\|u\|_{\rho_{_{X}}}=\inf\left\{\lambda>0:\rho_{_{X}}\left(\frac{u}{\lambda}\right)\leq 1\right\}.
		$$
	\end{definition}
	\begin{remark}
		\item [$i)$] It is easy to see that $	\|.\|_{\rho_{_{X}}}$ is a norm on $X$ which is equivalent to the norm $\|.\|_{X}$,
		\item [$ii)$] $	\rho_{_{X}}$ also check the results of Proposition $\ref{5}$.
	\end{remark}
	\par Now, let $X_{0}$ denotes the following linear subspace of $X$
	$$
	X_{0}=\left\{u\in X:u=0\,\,\, \text{a.e. in } \mathbb{R}^{N}\backslash\Omega\right\},
	$$
	with the norm 
	$$
	\|u\|_{X_{0}}=[u]_{X}=\inf\left\{\lambda>0:\int_{Q}\frac{|u(x)-u(y)|^{p(x,y)}}{\lambda^{p(x,y)}|x-y|^{N+sp(x,y)}}\,dxdy<1\right\}.
	$$
	\begin{lemma}[\cite{ABS8}]
		$\left(X_{0},\|.\|_{X_{0}}\right)$ is a separable, reflexive and uniformly convex Banach space.
	\end{lemma}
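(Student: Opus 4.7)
The plan is to prove the four properties in sequence: the Banach-space structure of $(X_0,\|\cdot\|_{X_0})$, uniform convexity (the heart of the argument), and then reflexivity and separability as consequences.

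First I would verify that $\|\cdot\|_{X_0}=[\cdot]_X$ is a norm on $X_0$. Positive homogeneity and the triangle inequality transfer from the Luxemburg construction in the usual way. For positivity, suppose $[u]_X=0$; then $u(x)=u(y)$ a.e.\ in $Q$. Choosing $(x,y)$ with $x\in\Omega$ and $y\in\mathbb{R}^{N}\setminus\Omega$ (a subset of $Q$ of positive measure), together with $u\equiv 0$ a.e.\ on $\Omega^{c}$, forces $u\equiv 0$ a.e.\ on $\Omega$. Next, $X_0$ is a closed subspace of $(X,\|\cdot\|_X)$: any $X$-limit of a sequence vanishing a.e.\ on $\Omega^{c}$ still vanishes there, by $L^{\overline p(x)}$-convergence on $\Omega$ and by extracting a pointwise a.e.\ convergent subsequence on $\Omega^{c}$. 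A Poincaré-type inequality $\|u\|_{L^{\overline p(x)}(\Omega)}\le C[u]_X$ for $u\in X_0$, derivable from Theorem~\ref{ba} applied via the zero extension, shows that $\|\cdot\|_{X_0}$ and $\|\cdot\|_X$ are equivalent on $X_0$, so $(X_0,\|\cdot\|_{X_0})$ is a Banach space.

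The decisive step is uniform convexity. My approach is to embed $X_0$ isometrically into a variable-exponent Lebesgue space over $Q$ equipped with the $\sigma$-finite measure $d\mu(x,y)=|x-y|^{-N}\,dx\,dy$. Define
\[
T\colon X_0 \longrightarrow L^{p(x,y)}(Q,d\mu), \qquad T(u)(x,y)=\frac{u(x)-u(y)}{|x-y|^{s}}.
\]
A direct inspection of the Luxemburg infima shows that, for every $\lambda>0$, the modular $\int_{Q}|T(u)/\lambda|^{p(x,y)}\,d\mu$ coincides exactly with the Gagliardo modular $\int_{Q}\frac{|u(x)-u(y)|^{p(x,y)}}{\lambda^{p(x,y)}|x-y|^{N+sp(x,y)}}\,dx\,dy$ defining $[u]_X$, so $T$ is a linear isometry onto its image. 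Since $1<p^{-}\le p^{+}<\infty$, the variable-exponent space $L^{p(x,y)}(Q,d\mu)$ is uniformly convex by the standard theory (cf.~\cite{ac,KovRak}), and uniform convexity is inherited by its closed subspace $T(X_0)$, hence by $X_0$ itself.

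With uniform convexity established, the Milman--Pettis theorem immediately yields the reflexivity of $X_0$. Separability is then inherited: $(X,\|\cdot\|_X)$ is separable, $X_0$ is a closed subspace of this separable metric space, and closed subspaces of separable metric spaces are separable. The principal obstacle is the uniform convexity step: one must carefully verify that $T$ is a genuine isometry between the two Luxemburg norms and that the variable-exponent Lebesgue space over $Q$ with the non-standard measure $d\mu$ falls within the scope of the classical uniform-convexity results, but both reduce to the well-established variable-exponent framework once $d\mu$ is identified as a $\sigma$-finite Radon measure on $Q$.
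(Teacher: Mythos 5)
Your overall strategy --- realizing $(X_0,[\cdot]_X)$ isometrically as a subspace of the variable-exponent Lebesgue space $L^{p(x,y)}(Q,d\mu)$ with $d\mu=|x-y|^{-N}\,dx\,dy$, invoking uniform convexity of that space for $1<p^{-}\le p^{+}<\infty$, and then deducing reflexivity from Milman--Pettis and separability from the closed-subspace property --- is the standard route, and it is essentially the argument of the reference \cite{ABS8} that the paper cites for this lemma (the paper itself reproduces no proof). The isometry computation is correct: the modular of $T(u)/\lambda$ with respect to $d\mu$ coincides with the Gagliardo modular, so the two Luxemburg infima agree and uniform convexity transfers.

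The genuine gap is the Poincar\'e-type inequality $\|u\|_{L^{\overline p(x)}(\Omega)}\le C[u]_X$ for $u\in X_0$, on which your completeness and separability claims (and hence, via Milman--Pettis, reflexivity) rest. Theorem~\ref{ba} cannot deliver it: it is stated for bounded domains and its right-hand side is the full norm $\|u\|_{W}=\|u\|_{L^{\overline p(x)}(\Omega)}+[u]_{s,p(x,y)}$, so it does not bound the Lebesgue norm by the seminorm alone. The usual proof of this inequality exploits the portion of the modular over $\Omega\times\Omega^{c}$ together with the uniform lower bound $\int_{\Omega^{c}}|x-y|^{-N-sp(x,y)}\,dy\ge c>0$ for $x\in\Omega$; this holds when $\Omega$ is bounded but fails in the unbounded setting of the present paper, since points $x\in\Omega$ may lie arbitrarily far from $\Omega^{c}$ and the integral then degenerates to $0$. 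You would need either a genuinely different argument (for instance a Sobolev-type inequality in the spirit of Theorem~\ref{bc}, adapted to variable exponents and applied to the zero extension) or an explicit additional hypothesis; as written, the step is unsupported. A smaller repair: your closedness argument for $X_0$ in $X$ cannot ``extract a pointwise a.e.\ convergent subsequence on $\Omega^{c}$'' from $\|\cdot\|_{X}$-convergence, because that norm does not see $u$ on $\Omega^{c}$ directly; instead one passes through the $\Omega\times\Omega^{c}$ part of the Gagliardo modular, where $u_n(y)=0$ turns the difference quotient of $u_n-u$ into $(u_n-u)(x)+u(y)$, and Fatou's lemma then forces $u=0$ a.e.\ on $\Omega^{c}$.
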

	\par The following theorem, gives the relation between $W$ and $X$ and establish  a compact and continuous embedding of $X$ into Lebesgue spaces with variable exponent.
	\begin{theorem}[\cite{ABS8}]\label{eb}
		Let $\Omega$ be a Lipschitz bounded domain in $\mathbb{R}^{N}$ and let $s \in (0,1)$. Let $p:\overline{Q}\longrightarrow(1,+\infty)$ be a continuous function satisfies $(\ref{n})$ and $(\ref{l})$ on $\overline{Q}$ with $sp^{+}<N$. Then the following assertions hold:
	\begin{itemize}
		\item[$(i)$] If $u\in X$, then $u\in W$. Moreover,
		$$
		\|u\|_{W}\leq \|u\|_{X};
		$$
		\item[$(ii)$] If $u\in X_{0}$, then $u\in W^{s,p(x,y)}(\mathbb{R}^{N})$. Moreover,
		$$
		\|u\|_{W}\leq \|u\|_{W^{s,p(x,y)}(\mathbb{R}^{N})} =\|u\|_{X};
		$$
		\item[$(ii)$] Assume that $r:\overline{\Omega}\longrightarrow(1,+\infty)$ is continuous variable exponent such that 
		$$
		p^{*}_{s}(x)=\tfrac{N\overline{p}(x)}{N-s\overline{p}(x)}>r(x)\geq r^{-}=\inf_{x\in \overline{\Omega}}
		r(x)>1,
		$$
		for all $x\in \overline{\Omega}$. Then, there exists a constant $C=C(N,s,p,r,\Omega)>0$ such that, for any $u\in X$, 
		$$
		\|u\|_{L^{r(x)}(\Omega)}\leq C\|u\|_{X}.
		$$
		Thus, the space $X$ is continuously embedded in $L^{r(x)}(\Omega)$ for any $r\in (1,p_{s}^{*})$. Moreover, this embedding is compact.
	\end{itemize}
	\end{theorem}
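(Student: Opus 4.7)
The strategy is to find a nontrivial critical point of the energy functional
$$
J(u)=\int_{Q}\frac{|u(x)-u(y)|^{p(x,y)}}{p(x,y)|x-y|^{N+sp(x,y)}}\,dx\,dy+\int_{\Omega}\frac{b(x)}{\overline{p}(x)}|u(x)|^{\overline{p}(x)}\,dx-\int_{\Omega}F(x,u)\,dx
$$
on the reflexive Banach space $X_{0}$, by appealing to the Mountain Pass Theorem of Ambrosetti--Rabinowitz. I would first verify that $J\in C^{1}(X_{0},\mathbb{R})$ and that its critical points are precisely the weak solutions of $(\mathcal{P}_s)$ as defined above. The first two terms are standard for operators of $p(x,\cdot)$-Laplacian type; for the nonlinear term, growth $(H_{1})$, Hölder's inequality with exponents $r(x)$ and $r(x)/(r(x)-1)$, Proposition~\ref{11}, and the continuous embedding $X_{0}\hookrightarrow L^{h(x)}(\Omega)$, supplied by Theorem~\ref{bb} in combination with the norm comparison in Theorem~\ref{eb}(ii) (valid pointwise and hence in the unbounded setting), yield
$$
\int_{\Omega}|F(x,u)|\,dx\leq C\,\|g\|_{L^{r(x)}(\Omega)}\|u\|_{X_{0}}^{\overline{q}},\qquad \overline{q}\in[q^{-},q^{+}],
$$
from which Gateaux/Fréchet differentiability of the last term follows by classical arguments.

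The mountain pass geometry is then immediate. Since $q^{-}>p^{+}$ and the modular inequalities of Proposition~\ref{5} imply that the leading part of $J$ dominates $\|u\|_{X_{0}}^{p^{+}}$ near the origin, the bound above yields $J(u)\geq\alpha>0$ on some sphere $\|u\|_{X_{0}}=\rho$. For the second geometric condition, integrating $(H_{2})$ gives $F(x,t)\geq c_{1}|t|^{\mu}-c_{2}$ with $\mu>p^{+}$, so for any $u_{0}\in C_{0}^{\infty}(\Omega)\setminus\{0\}$ one has $J(tu_{0})\to-\infty$ as $t\to+\infty$, providing $e$ with $\|e\|_{X_{0}}>\rho$ and $J(e)<0$.

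The decisive difficulty is the Palais--Smale condition, since the unboundedness of $\Omega$ destroys Rellich--Kondrachov compactness. For $(u_{n})\subset X_{0}$ with $J(u_{n})\to c$ and $J'(u_{n})\to 0$, the combination
$$
J(u_{n})-\tfrac{1}{\mu}\langle J'(u_{n}),u_{n}\rangle\geq\Bigl(\tfrac{1}{p^{+}}-\tfrac{1}{\mu}\Bigr)\min\bigl\{\|u_{n}\|_{X_{0}}^{p^{-}},\|u_{n}\|_{X_{0}}^{p^{+}}\bigr\}
$$
(valid thanks to $(H_{2})$ and the modular/norm relations of Proposition~\ref{5}) forces $(u_{n})$ to be bounded, hence $u_{n}\rightharpoonup u$ in $X_{0}$ along a subsequence. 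To control the nonlinearity in spite of the missing global compactness, I would exploit the weight $g\in L^{r(x)}(\Omega)\cap L^{\infty}(\Omega)$: by absolute continuity of the modular, for any $\varepsilon>0$ there exists $R>0$ with $\|g\|_{L^{r(x)}(\Omega\setminus B_{R})}<\varepsilon$. On the bounded piece $\Omega\cap B_{R}$, the compact embedding of Theorem~\ref{eb}(iii) (or equivalently Theorem~\ref{ba}) applies to the restrictions and yields $u_{n}\to u$ strongly in $L^{h(x)}(\Omega\cap B_{R})$; on the tail, Hölder's inequality together with uniform boundedness of $(u_{n})$ in $L^{h(x)}(\Omega)$ gives
$$
\int_{\Omega\setminus B_{R}}g(x)\bigl(|u_{n}|^{q(x)-1}+|u|^{q(x)-1}\bigr)|u_{n}-u|\,dx\leq C\varepsilon.
$$
Summing the two contributions yields $\int_{\Omega}\bigl[f(x,u_{n})-f(x,u)\bigr](u_{n}-u)\,dx\to 0$, which together with the analogous convergence of the $b(x)|u|^{\overline{p}(x)-2}u$ term (handled by the same splitting since $\overline{p}\leq h$) produces $\langle J'(u_{n})-J'(u),u_{n}-u\rangle\to 0$.

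Finally, invoking the $(S_{+})$-property of the operator $u\mapsto(-\Delta)^{s}_{p(x,\cdot)}u+b(x)|u|^{\overline{p}(x)-2}u$ on $X_{0}$ converts the last vanishing, combined with $u_{n}\rightharpoonup u$, into strong convergence $u_{n}\to u$ in $X_{0}$. With Palais--Smale verified, the Mountain Pass Theorem delivers a critical point $u\in X_{0}$ at the minimax level $c\geq\alpha>0=J(0)$, which is therefore the desired nontrivial weak solution of $(\mathcal{P}_s)$. The principal obstacle throughout is the tail control encapsulated in the weighted compactness step; the AR condition, the embedding Theorem~\ref{bb}, and the integrability of $g$ are precisely calibrated to overcome the non-compactness created by the unboundedness of $\Omega$.
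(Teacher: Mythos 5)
Your proposal does not address the statement at hand. The statement is Theorem~\ref{eb}, a functional-analytic embedding result (quoted from \cite{ABS8}) for a \emph{bounded} Lipschitz domain $\Omega$: it asserts the norm comparisons $\|u\|_{W}\leq\|u\|_{X}$ for $u\in X$ and $\|u\|_{W}\leq\|u\|_{W^{s,p(x,y)}(\mathbb{R}^{N})}=\|u\|_{X}$ for $u\in X_{0}$, together with the continuous and \emph{compact} embedding $X\hookrightarrow L^{r(x)}(\Omega)$ for subcritical $r$. A proof of this would have to compare the Gagliardo modulars over $\Omega\times\Omega$ and over $Q=\mathbb{R}^{2N}\setminus(\Omega^{c}\times\Omega^{c})$ (the first two items follow from monotonicity of the modular in the integration domain, since $\Omega\times\Omega\subset Q\subset\mathbb{R}^{2N}$), and then combine item $(i)$ with the bounded-domain embedding of Theorem~\ref{ba} to get item $(iii)$, including the compactness.

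What you have written instead is a mountain-pass existence argument for problem $(\mathcal{P}_{s})$ --- verification of $C^{1}$ regularity of the energy functional, the two geometric conditions, the Palais--Smale condition via tail control with the weight $g$, and the $(S_{+})$ property. That is an outline of the proof of Theorem~\ref{3}, not of Theorem~\ref{eb}; indeed your argument \emph{uses} Theorem~\ref{eb} (you invoke its parts $(ii)$ and $(iii)$ explicitly) rather than proving it. Nothing in your text establishes the inequality $\|u\|_{W}\leq\|u\|_{X}$, the identity $\|u\|_{W^{s,p(x,y)}(\mathbb{R}^{N})}=\|u\|_{X}$ on $X_{0}$, or the compactness of $X\hookrightarrow L^{r(x)}(\Omega)$. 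You need to restart from the actual statement.
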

	\par Let denote by $\mathscr{L}$ the operator associated to the $\left(-\Delta\right)_{p(x,.)}^{s}$ defined as
	 $$
	\mathscr{L}:X_{0}\longrightarrow X_{0}^{*}
	$$
	$$
	\hspace{3cm}u\longrightarrow \mathscr{L}(u):X_{0}\longrightarrow \mathbb{R}
	$$
	$$
	\hspace{7cm} \phi\longrightarrow \left\langle \mathscr{L}(u),\phi\right\rangle ,
	$$
	such that  
	$$
	\left\langle \mathscr{L}(u),\phi\right\rangle =\int_{Q}\frac{|u(x)-u(y)|^{p(x,y)-2}(u(x)-u(y))(\phi(x)-\phi(y))}{|x-y|^{N+sp(x,y)}}\,dxdy,
	$$
where $\left\langle \cdot,\cdot\right\rangle$ denotes the usual duality between $X_{0}$ and its dual space $X_0^*$.
	\begin{lemma}[\cite{Bahrouni1}]
		Assume that $(\ref{n})$ and $(\ref{l})$ are satisfied on $\overline{Q}$ and that $s\in(0,1)$. Then,  following assertions hold:
		\item[$(i)$] $\mathscr{L}$ is bounded and strictly monotone operator,
		\item[$(ii)$] $\mathscr{L}$ is a mapping of type $(S_{+})$, i.e., if $u_{k}\rightharpoonup u$ in $X_{0}$ and $\displaystyle\limsup_{k\to +\infty}\left\langle \mathscr{L}(u_{k})-\mathscr{L}(u),u_{k}-u\right\rangle \leq 0$, then $u_{k}\longrightarrow u$ in $X_{0}$.
		\item[$(iii)$] $\mathscr{L}$ is a homeomorphism.
	\end{lemma}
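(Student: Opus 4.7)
The plan is to prove the three properties of $\mathscr{L}$ sequentially: boundedness and strict monotonicity of $\mathscr{L}$ as a direct consequence of H\"older's inequality (in modular form) and the well-known pointwise inequalities for the map $\xi\mapsto|\xi|^{p-2}\xi$; then the $(S_+)$ property by combining monotonicity with a Brezis--Lieb style passage from weak to modular convergence; and finally the homeomorphism property by applying the Browder--Minty theorem to this strictly monotone, continuous, coercive $(S_+)$-operator. Throughout, I will work with the modular $\rho_{X_0}(u)=\int_Q|u(x)-u(y)|^{p(x,y)}|x-y|^{-(N+sp(x,y))}dxdy$ (which is equivalent to the norm on $X_0$ in the sense of Proposition \ref{5}).

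For (i), the boundedness of $\mathscr{L}$: set $\alpha(x,y)=\tfrac{p(x,y)-1}{p(x,y)}$ and $\beta(x,y)=\tfrac{1}{p(x,y)}$ and write the integrand of $\langle\mathscr{L}(u),\phi\rangle$ as the product
$$\frac{|u(x)-u(y)|^{p(x,y)-1}}{|x-y|^{(N+sp(x,y))\alpha(x,y)}}\cdot\frac{\phi(x)-\phi(y)}{|x-y|^{(N+sp(x,y))\beta(x,y)}}.$$
Then the variable-exponent H\"older inequality for the measure $dxdy$ on $Q$ yields $|\langle\mathscr{L}(u),\phi\rangle|\le 2\,\|F_u\|_{L^{\hat{p}(x,y)}}\|G_\phi\|_{L^{p(x,y)}}$, and Proposition \ref{5} converts the modular into the norm, giving $\|\mathscr{L}(u)\|_{X_0^*}\le C\bigl([u]_X^{p^+-1}+[u]_X^{p^--1}\bigr)$. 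For strict monotonicity, I will use the elementary inequalities (Simon type)
$$(|a|^{p-2}a-|b|^{p-2}b)(a-b)\ge \begin{cases} C_p|a-b|^p & p\ge 2\\ C_p\dfrac{|a-b|^2}{(|a|+|b|)^{2-p}} & 1<p<2\end{cases}$$
applied pointwise with $a=u(x)-u(y)$, $b=v(x)-v(y)$ and exponent $p(x,y)$. Integrating over $Q$ shows $\langle\mathscr{L}(u)-\mathscr{L}(v),u-v\rangle>0$ whenever $u\ne v$ in $X_0$.

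For (ii), the $(S_+)$ property is the main technical obstacle. Given $u_k\rightharpoonup u$ in $X_0$ with $\limsup\langle\mathscr{L}(u_k)-\mathscr{L}(u),u_k-u\rangle\le 0$, the monotonicity just established together with weak convergence forces $\lim_{k\to\infty}\langle\mathscr{L}(u_k)-\mathscr{L}(u),u_k-u\rangle=0$. In the super-quadratic regime $p(x,y)\ge 2$ the pointwise Simon inequality gives directly
$$\int_Q\frac{|(u_k-u)(x)-(u_k-u)(y)|^{p(x,y)}}{|x-y|^{N+sp(x,y)}}\,dxdy\longrightarrow 0,$$
which by Proposition \ref{5} means $[u_k-u]_X\to 0$. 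In the subquadratic regime $1<p(x,y)<2$, I will split $Q$ into the subsets where $p(x,y)\ge 2$ and $p(x,y)<2$, and on the latter use H\"older's inequality with exponents $\tfrac{2}{p(x,y)}$ and $\tfrac{2}{2-p(x,y)}$ to absorb the denominator $(|a|+|b|)^{2-p(x,y)}$, using the uniform bound $\sup_k[u_k]_X<\infty$ coming from weak convergence. Modular convergence then again yields norm convergence in $X_0$.

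For (iii), the homeomorphism property: continuity follows from the boundedness proven in (i) together with the $(S_+)$ property (any convergent sequence is automatically strongly convergent in preimage). Injectivity is immediate from strict monotonicity. Coercivity follows from Proposition \ref{5}: $\langle\mathscr{L}(u),u\rangle=\rho_{X_0\text{-seminorm}}(u)\ge[u]_X^{p^-}$ when $[u]_X\ge 1$, so $\langle\mathscr{L}(u),u\rangle/\|u\|_{X_0}\to+\infty$. Since $\mathscr{L}$ is continuous, bounded, monotone and coercive on the reflexive Banach space $X_0$, the Browder--Minty surjectivity theorem gives $\mathscr{L}(X_0)=X_0^*$. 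Thus $\mathscr{L}$ is a continuous bijection; continuity of $\mathscr{L}^{-1}$ follows from the $(S_+)$ property by a standard argument (if $\mathscr{L}(u_k)\to\mathscr{L}(u)$ then by coercivity $u_k$ is bounded, any weak cluster point $v$ satisfies $\langle\mathscr{L}(u_k)-\mathscr{L}(v),u_k-v\rangle\to 0$, hence $u_k\to v$ strongly and by injectivity $v=u$), completing the proof.
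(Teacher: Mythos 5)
The paper does not actually prove this lemma: it is imported verbatim from \cite{Bahrouni1} (Bahrouni--R\u{a}dulescu), so there is no internal argument to compare yours against. Your outline is, in substance, a reconstruction of the standard proof given in that reference: H\"older in $L^{p(x,y)}(Q)\times L^{\hat{p}(x,y)}(Q)$ plus the modular--norm relations for boundedness, the Simon-type pointwise inequalities (split according to $p(x,y)\ge 2$ and $p(x,y)<2$) for strict monotonicity and for upgrading $\langle \mathscr{L}(u_k)-\mathscr{L}(u),u_k-u\rangle\to 0$ to modular convergence of $u_k-u$, and Browder--Minty for surjectivity. One point your version handles implicitly that is worth making explicit: the cited lemma is stated in \cite{Bahrouni1} for bounded domains, whereas here $\Omega$ is unbounded; your argument uses only the modular structure on $Q$ and the fact that elements of $X_0$ vanish on $\Omega^c$ (which is what turns ``$(u-v)(x)=(u-v)(y)$ a.e.\ on $Q$'' into $u=v$), so it goes through unchanged, which is exactly why the citation is legitimate.

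The one genuinely loose step is in (iii): continuity of $\mathscr{L}:X_0\to X_0^*$ does \emph{not} follow from ``boundedness plus $(S_+)$'' as you assert --- those properties constrain preimages, not images. You need a separate argument showing that $u_k\to u$ in $X_0$ forces
$$\frac{|u_k(x)-u_k(y)|^{p(x,y)-2}(u_k(x)-u_k(y))}{|x-y|^{(N+sp(x,y))/\hat{p}(x,y)}}\longrightarrow \frac{|u(x)-u(y)|^{p(x,y)-2}(u(x)-u(y))}{|x-y|^{(N+sp(x,y))/\hat{p}(x,y)}}\quad\text{in } L^{\hat{p}(x,y)}(Q),$$
which is a Nemytskii-operator continuity statement proved by a subsequence/dominated-convergence argument in the modular; continuity of $\mathscr{L}$ then follows from H\"older. (For the Browder--Minty step alone, demicontinuity would suffice, but for the homeomorphism claim you need norm-to-norm continuity.) The rest of your sketch --- coercivity from $\langle\mathscr{L}(u),u\rangle\ge [u]_X^{p^-}$ for $[u]_X\ge 1$, injectivity from strict monotonicity, and continuity of $\mathscr{L}^{-1}$ from $(S_+)$ --- is correct and is the standard route.
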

	\par  Throughout this paper, for simplicity, we will use $c_{i}$ and $C$ to denote positive constant $($the exact value may change from line to line$)$.
		\section{Embeddings results}$\label{22}$
In this section, we establish the proof of a central result of this work, namely the embedding theorem in unbounded domain stated in Theorem~\ref{bb}. This key result sets the foundation for further applications. 

	\begin{proof}
		\par We will deal with theorem $\ref{bb}$ in two cases :\\
$\bullet$	\textbf{\underline{Case 1}} : Let $u\in W^{s,p(x,y)}(\Omega)\cap L^{\infty}(\Omega)$. Since $\Omega\subset\mathbb{R}^{N}$ be an unbounded domain with cone condition, then there exists a sequence $\{B_{k}\}$ of disjoint open balls contained in $\Omega$ and all having the same positive radius $\epsilon>0$.
	\par Since $p,q $ are uniformly continuous on $\overline{\Omega}$, we can choose $\epsilon>0$ small enough and $t\in (0,s)$ such that 
	\begin{equation}\label{bd}
		p_{k}^{-}\leq \overline{p}(x)\leq q(x)\leq q_{k}^{+}\leq (p_{k}^{-})_{t}^{*}=\frac{Np_{k}^{-}}{N-tp_{k}^{-}} \quad \text{on each }B_{k},
	\end{equation}
	where
	$$
		p_{k}^{-}=\inf_{(x,y)\in B_{k}\times B_{k}}p(x,y)\quad \text{and }\quad q_{k}^{+}=\sup_{x\in B_{k}}q(x).
	$$
	\par Let $\{\psi_{k}\}\subset C^{\infty}(\mathbb{R}^{N})\,\,(k=1,2,\dots)$, $0\leq\psi_{k}\leq 1$, $supp(\psi_{k})\subset B_{k}$ and denote $u_{k}(x)=\psi_{k}(x)u(x)$. Hence, we can apply Theorem $\ref{24}$ to obtain the existence of a positif constant $c_{1}=c_{1}(N,p_{k}^{-},t,\epsilon,B_{k})$ such that 
	\begin{equation}
		\|u_{k}\|_{L^{(p_{k}^{-})_{t}^{*}}(B_{k})}\leq c_{1}\left(\|u_{k}\|_{L^{p_{k}^{-}}(B_{k})}+[u_{k}]_{t,p_{k}^{-}}(B_{k})\right).
	\end{equation}
	By $(\ref{bd})$ and proposition $\ref{ad}$, there exists a positif constant $c_{2}$ such that
	\begin{equation}\label{cd}
		\|u_{k}\|_{L^{q(x)}(\Omega)}=\|u_{k}\|_{L^{q(x)}(B_{k})}\leq c_{2}\|u_{k}\|_{L^{(p_{k}^{-})_{t}^{*}}(B_{k})}.
	\end{equation}
	On the other hand, since $|u_{k}(x)|\leq|u(x)| $, then for any $\lambda>0$, we have
	\begin{equation}\label{be}
		\int_{B_{k}}\left|\frac{u_{k}}{\lambda}\right|^{\overline{p}(x)}\,dx\leq\int_{B_{k}}\left|\frac{u}{\lambda}\right|^{\overline{p}(x)}\,dx .
	\end{equation}
	We set
	$$
	\mathcal{A}_{\lambda,B_{k}}(u_{k})=\left\{\lambda>0:\int_{B_{k}}\left|\tfrac{u_{k}}{\lambda}\right|^{\overline{p}(x)}\,dx\leq1\right\}
~~
\text{ and }
~~
	\mathcal{A}_{\lambda,B_{k}}(u)=\left\{\lambda>0:\int_{B_{k}}\left|\tfrac{u}{\lambda}\right|^{\overline{p}(x)}\,dx\leq1\right\}.
	$$
	From $(\ref{be})$, we have that $\mathcal{A}_{\lambda,B_{k}}(u)\subseteq \mathcal{A}_{\lambda,B_{k}}(u_{k})$. Hence
	\begin{equation}\label{bf}
	\|u_{k}\|_{L^{\overline{p}(x)}(B_{k})}\leq\|u\|_{L^{\overline{p}(x)}(B_{k})}\leq\|u\|_{L^{\overline{p}(x)}(\Omega)}	.
	\end{equation}
As $p_{k}^{-}\leq \overline{p}(x),$ for all $x\in \overline{\Omega}$, then by Proposition $\ref{ad}$ there exists a positive constant $c_{3}$ such that 
	\begin{equation}\label{bg}
	\|u_{k}\|_{L^{p_{k}^{-}}(B_{k})}\leq c_{3}\|u_{k}\|_{L^{\overline{p}(x)}(B_{k})}.
	\end{equation} 
Combining $(\ref{bf})$ and $(\ref{bg})$, we get
	\begin{equation}\label{ce}
	\|u_{k}\|_{L^{p_{k}^{-}}(B_{k})}\leq c_{3}\|u\|_{L^{\overline{p}(x)}(\Omega)}.
	\end{equation}
Next, observe that 
	\begin{align*}
		\int_{B_{k}}\int_{B_{k}}\frac{|u_{k}(x)-u_{k}(y)|^{p_{k}^{-}}}{|x-y|^{N+tp_{k}^{-}}}\,dxdy&=\int_{B_{k}}\int_{B_{k}}\frac{|\psi_{k}(x)u(x)-\psi_{k}(y)u(y)|^{p_{k}^{-}}}{|x-y|^{N+tp_{k}^{-}}}\,dxdy\\
		&\leq2^{p_{k}^{-}}\int_{B_{k}}\int_{B_{k}}\frac{|\psi_{k}(y)|^{p_{k}^{-}}|u(x)-u(y)|^{p_{k}^{-}}}{|x-y|^{N+tp_{k}^{-}}}\,dxdy\\
		&\hspace{0.5cm}+2^{p_{k}^{-}}\int_{B_{k}}\int_{B_{k}}\frac{|\psi_{k}(x)-\psi_{k}(y)|^{p_{k}^{-}}}{|x-y|^{N+tp_{k}^{-}}}\,|u(x)|^{p_{k}^{-}}\,dxdy.
	\end{align*}
Since $0\leq \psi_{k}(x)\leq 1$ and $|\psi_{k}(x)-\psi_{k}(y)|\leq \|\nabla \psi_{k}\|_{\infty}|x-y|$  for all $x,y\in \Omega$, we deduce that 
	\begin{align*}	\int_{B_{k}}\int_{B_{k}}\frac{|u_{k}(x)-u_{k}(y)|^{p_{k}^{-}}}{|x-y|^{N+tp_{k}^{-}}}\,dxdy
	&\leq2^{p_{k}^{-}}\int_{B_{k}}\int_{B_{k}}\frac{|u(x)-u(y)|^{p_{k}^{-}}}{|x-y|^{N+tp_{k}^{-}}}\,dxdy\\
	&\hspace{0.5cm}+2^{p_{k}^{-}}\|\nabla \psi_{k}\|_{\infty}^{p_{k}^{-}}\int_{B_{k}}\left(\int_{B_{k}}|x-y|^{-N+(1-t)p_{k}^{-}}\,dy\right)\,|u(x)|^{p_{k}^{-}}\,dx\\
	&\leq c_{4}\left(\int_{B_{k}}\int_{B_{k}}\frac{|u(x)-u(y)|^{p_{k}^{-}}}{|x-y|^{N+tp_{k}^{-}}}\,dxdy+\int_{B_{k}}|u(x)|^{p_{k}^{-}}\,dx\right).
	\end{align*}
	Then
	$$
	[u_{k}]_{t,p_{k}^{-}}(B_{k})\leq c_{5} \left(\|u\|_{L^{p_{k}^{-}}(B_{k})}+[u]_{t,p_{k}^{-}}(B_{k})\right).
	$$
Considering that $p_{k}^{-}\leq \overline{p}(x)$, for all $x\in B_{k}$, then by proposition $\ref{ad}$ there exists a positif constant $c_{6}$ such that 
	\begin{equation}\label{cc}
		\|u\|_{L^{p_{k}^{-}}(B_{k})}\leq c_{6}\|u\|_{L^{\overline{p}(x)}(B_{k})}\leq c_{6}\|u\|_{L^{\overline{p}(x)}(\Omega)}.
	\end{equation} 
It remains to show that there exists a positive constant 
 $c_{7}$ such that 
	\begin{equation}\label{cb}
		[u]_{t,p_{k}^{-}}(B_{k})\leq c_{7}[u]_{s,p(x,y)}.
	\end{equation}
	Indeed, let us set  
	$$
	U(x,y)=\frac{|u(x)-u(y)|}{|x-y|^{s}}.
	$$
Then by proposition $\ref{ad}$, we have 
	\begin{align*}
		[u]_{t,p_{k}^{-}}(B_{k})&=\left(\int_{B_{k}}\int_{B_{k}}\frac{|u(x)-u(y)|^{p_{k}^{-}}}{|x-y|^{N+tp_{k}^{-}}}\,dxdy\right)^{\frac{1}{p_{k}^{-}}}\\
		&=\left(\int_{B_{k}}\int_{B_{k}}\left(\frac{|u(x)-u(y)|}{|x-y|^{s}}\right)^{p_{k}^{-}}\frac{dxdy}{|x-y|^{N+(t-s)p_{k}^{-}}}\right)^{\frac{1}{p_{k}^{-}}}\\
		&=\|U\|_{L^{p_{k}^{-}}(B_{k}\times B_{k},\nu)}\\
		&\leq c_{8}\|U\|_{L^{p(x,y)}(B_{k}\times B_{k},\nu)},
	\end{align*}
	where 
	$$
	d\nu(x,y)=\tfrac{dxdy}{|x-y|^{N+(t-s)p_{k}^{-}}}.
	$$
	Hence, it suffices to prove that 
	\begin{equation}\label{ca}
	\|U\|_{L^{p(x,y)}(B_{k}\times B_{k},\nu)}\leq c_{9}[u]_{s,p(x,y)}.
\end{equation}
In fact,  let $\eta>0$ be such that 
	$$
	\int_{\Omega}\int_{\Omega}\frac{|u(x)-u(y)|^{p(x,y)}}{\eta^{p(x,y)}|x-y|^{N+sp(x,y)}}\,dxdy<1.
	$$
	 Choose 
	$$
	h=\sup\left\{1,\sup_{(x,y)\in \Omega\times\Omega}|x-y|^{s-t}\right\}\quad \text{and  }\quad\overline{\eta}=\eta h.
	$$
	Then
	\begin{align*}
		&\int_{B_{k}}\int_{B_{k}}\left(\frac{|u(x)-u(y)|}{\overline{\eta}|x-y|^{s}}\right)^{p(x,y)}\frac{dxdy}{|x-y|^{N+(t-s)p_{k}^{-}}}\\
		&=\int_{B_{k}}\int_{B_{k}}\frac{|x-y|^{(s-t)p_{k}^{-}}}{h^{p(x,y)}}\,\frac{|u(x)-u(y)|^{p(x,y)}}{\eta^{p(x,y)}|x-y|^{N+sp(x,y)}}\,dxdy\\
		&\leq\int_{B_{k}}\int_{B_{k}}\frac{|u(x)-u(y)|^{p(x,y)}}{\eta^{p(x,y)}|x-y|^{N+sp(x,y)}} \, dxdy<1.
	\end{align*}
 Thus
	$$
	\|U\|_{L^{p(x,y)}(B_{k}\times B_{k},\nu)}\leq\overline{\eta }=\eta h.
	$$
This yields $(\ref{ca})$. and consequently, inequality $(\ref{cb})$ is proved.\\
Therefore, by combining 
 $(\ref{cc})$ and $(\ref{cb})$, we infer that
	\begin{equation}\label{cf}
			[u_{k}]_{t,p_{k}^{-}}(B_{k})\leq c_{5}\left(c_{6}\|u\|_{L^{\overline{p}(x)}\Omega}+c_{7}[u]_{s,p(x,y)}\right).
	\end{equation} 
	Consequently, by $(\ref{cd})$, $(\ref{ce})$ and $(\ref{cf})$, we get
	$$
	\|u_{k}\|_{L^{q(x)}(\Omega)}\leq C\left(\|u\|_{L^{\overline{p}(x)}(\Omega)}+[u]_{s,p(x,y)}\right).
	$$
 Assume that $\|u_{k}\|_{L^{q(x)}(\Omega)}\geq 1$, then, by Proposition $\ref{5}$, we get 
		$$
		\int_{\Omega}|u_{k}|^{q(x)}\,dx\leq C^{q^{+}}\left(\|u\|_{L^{\overline{p}(x)}(\Omega)}+[u]_{s,p(x,y)}\right)^{q^{+}}.
		$$
		Since $u_{k}(x)\longrightarrow u(x)\quad \text{a.e. }x\in \Omega$, by Fatou's lemma, we have 
		\begin{equation}\label{cg}
			\int_{\Omega}|u|^{q(x)}\,dx\leq C^{q^{+}}\left(\|u\|_{L^{\overline{p}(x)}(\Omega)}+[u]_{s,p(x,y)}\right)^{q^{+}}.
		\end{equation}	
$\bullet$	\textbf{\underline{Case 2}} : Let $u\in W^{s,p(x,y)}(\Omega)$,  we will prove that $(\ref{cg})$ is satisfied in this case.\\ Indeed,
	 for $k=1,2,\dots,$ let us consider
	$$
	u_{k}(x)=\left \{ \begin{array} {cl}
		u(x)   &\text{if } |u(x)|\leq k,\\
	k\,\text{sgn}(u(x)) &\text{if } |u(x)|> k.
\end{array} \right.	
	$$
	Then $u_{k} \in W^{s,p(x,y)}(\Omega)\cap L^{\infty}(\Omega)$. Notice that $|u_{k}(x)|\leq |u(x)|$ and
	\begin{equation}\label{ck}
	\frac{|u_{k}(x)-u_{k}(y)|}{|x-y|^{\frac{N}{p(x,y)}+s}}\leq \frac{|u(x)-u(y)|}{|x-y|^{\frac{N}{p(x,y)}+s}}.
	\end{equation}
 From $(\ref{cg})$, we get 
	$$
	\int_{\Omega}|u_{k}|^{q(x)}\,dx\leq C^{q^{+}}\left(\|u_{k}\|_{L^{\overline{p}(x)}(\Omega)}+[u_{k}]_{s,p(x,y)}\right)^{q^{+}}.
	$$
 Since $|u_{k}(x)|\leq |u(x)|$, Similarly to $(\ref{bf})$, we obtain
	\begin{equation}\label{da}
		\|u_{k}\|_{L^{\overline{p}(x)}(\Omega)}\leq \|u\|_{L^{\overline{p}(x)}(\Omega)}.
	\end{equation}
 Now, we prove that 
    $$
     [u_{k}]_{s,p(x,y)}\leq [u]_{s,p(x,y)} .
	$$
	Indeed, by $(\ref{ck})$, for all $\gamma>0$, we have 
	$$
	\int_{\Omega}\int_{\Omega}\tfrac{|u_{k}(x)-u_{k}(y)|^{p(x,y)}}{\gamma^{p(x,y)}|x-y|^{N+sp(x,y)}} \leq \int_{\Omega}\int_{\Omega}\tfrac{|u(x)-u(y)|^{p(x,y)}}{\gamma^{p(x,y)}|x-y|^{N+sp(x,y)}}.
	$$ 
We set 
	$$
	\mathcal{B}_{\lambda,\Omega\times\Omega}^{s}(u_{k})=\left\lbrace \gamma>0:\int_{\Omega}\int_{\Omega}\tfrac{|u_{k}(x)-u_{k}(y)|^{p(x,y)}}{\gamma^{p(x,y)}|x-y|^{N+sp(x,y)}}\leq 1\right\rbrace 
	$$
	and
	$$
	\mathcal{B}_{\lambda,\Omega\times\Omega}^{s}(u)=\left\lbrace \gamma>0:\int_{\Omega}\int_{\Omega}\tfrac{|u(x)-u(y)|^{p(x,y)}}{\gamma^{p(x,y)}|x-y|^{N+sp(x,y)}}\leq 1\right\rbrace .
	$$
	 Using $(\ref{ck})$, we have $\mathcal{B}_{\lambda,\Omega\times\Omega}^{s}(u)\subseteq \mathcal{B}_{\lambda,\Omega\times\Omega}^{s}(u_{k})$. Hence, 
	 \begin{equation}\label{db}
	 	[u_{k}]_{s,p(x,y)}\leq [u]_{s,p(x,y)}.
	 \end{equation} 
	Combining $(\ref{da})$ and $(\ref{db})$, we deduce that 
	 $$
	 \int_{\Omega}|u_{k}|^{q(x)}\,dx\leq C^{q^{+}}\left(\|u\|_{L^{\overline{p}(x)}(\Omega)}+[u]_{s,p(x,y)}\right)^{q^{+}},
	 $$
	 since $u_{k}(x)\longrightarrow u(x)\quad \text{a.e. }x\in \Omega$, by Fatou's lemma, we have
	 	$$
	 	\int_{\Omega}|u|^{q(x)}\,dx\leq C^{q^{+}}\left(\|u\|_{L^{\overline{p}(x)}(\Omega)}+[u]_{s,p(x,y)}\right)^{q^{+}}<+\infty.
	 $$
	 Thus $u\in L^{q(x)}(\Omega)$. This means that $W^{s,p(x,y)}(\Omega)$ is continuously embedded in $L^{q(x)}(\Omega)$. \\
	 The proof is complete.
	\end{proof}
	\begin{remark}
	The obtained result in	Theorem $\ref{bb}$ remains true if we replace $W^{s,p(x,y)}(\Omega)$ by $W^{s,p(x,y)}_{0}(\Omega)$.
	\end{remark}
	\par In the following theorem, we establish  the continuous embedding of $X$ into Lebesgue spaces with variable exponent in unbounded domain.
		\begin{theorem}$\label{ea}$
		Let $\Omega\subset \mathbb{R}^{N}$ be an unbounded domain with cone condition and $s\in (0,1)$. Let $p:\overline{Q}\longrightarrow(1,+\infty)$ be an uniformly continuous function satisfied $(\ref{n})$ and $(\ref{l})$ on $\overline{Q}$ with $sp^{+}<N$. Assume that $q:\overline{\Omega}\longrightarrow(1,+\infty)$ is an uniformly continuous function such that 
		$$
		\overline{p}(x)\leq q(x)\qquad\text{ for all }x\in \overline{\Omega}
		$$
		and
		$$
		\inf_{x\in \overline{\Omega}}\left(p_{s}^{*}(x)-q(x)\right)>0,
		$$
		then the space $X$ is continuously embedded in $L^{q(x)}(\Omega)$
	\end{theorem}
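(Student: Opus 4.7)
The plan is to reduce Theorem~\ref{ea} directly to Theorem~\ref{bb} by composing it with the canonical domination $\|u\|_W \le \|u\|_X$ valid for $u \in X$. The point is that the hypotheses imposed on $p:\overline{Q}\to(1,+\infty)$ and on $q:\overline{\Omega}\to(1,+\infty)$ in Theorem~\ref{ea} are \emph{exactly} the ones appearing in Theorem~\ref{bb} once one restricts $p$ to the closed subset $\overline{\Omega}\times\overline{\Omega}\subset\overline{Q}$. In particular, uniform continuity, the symmetry condition \eqref{l}, the bounds \eqref{n}, the requirement $sp^+<N$, as well as $\overline{p}(x)\le q(x)$ and $\inf_{x\in\overline{\Omega}}(p_s^*(x)-q(x))>0$, all transfer verbatim to the restricted exponent.

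With this observation, Theorem~\ref{bb} (applied to the unbounded domain $\Omega$ with cone condition, and to the restriction $p|_{\overline{\Omega}\times\overline{\Omega}}$) furnishes a constant $C=C(N,s,p,q,\Omega)>0$ such that
\[
\|v\|_{L^{q(x)}(\Omega)} \le C\,\|v\|_{W} \qquad \text{for every } v\in W.
\]
Next, I would invoke the comparison $\|u\|_{W}\le \|u\|_{X}$ for $u\in X$, which is the content of item (i) of Theorem~\ref{eb}. Chaining the two inequalities then yields $\|u\|_{L^{q(x)}(\Omega)} \le C\|u\|_{X}$ for every $u\in X$, which is exactly the continuous embedding asserted by Theorem~\ref{ea}.

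The only point that deserves a brief justification is that Theorem~\ref{eb}(i), although formally stated for a Lipschitz \emph{bounded} domain, also applies in the present unbounded setting. This is not a serious obstacle: the inequality $\|u\|_W\le \|u\|_X$ is a pure modular comparison that only uses the set-theoretic inclusion $\Omega\times\Omega\subseteq Q$ together with the fact that the $L^{\overline{p}(x)}(\Omega)$-contribution is identical in both norms. Indeed, for any $\lambda>0$ admissible in the definition of $[u]_X$, the same $\lambda$ is admissible for $[u]_{s,p(x,y)}(\Omega)$, hence $[u]_{s,p(x,y)}(\Omega)\le [u]_X$ and therefore $\|u\|_W\le \|u\|_X$, without any reliance on boundedness or regularity of $\partial\Omega$. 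With this remark in place, no compactness, extension, or localization argument is needed beyond what is already contained in the proof of Theorem~\ref{bb}.
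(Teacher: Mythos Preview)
Your proposal is correct and follows essentially the same approach as the paper, which also proves Theorem~\ref{ea} by combining Theorem~\ref{bb} with Theorem~\ref{eb}(i). Your added remark justifying why the inequality $\|u\|_W\le\|u\|_X$ extends to unbounded domains (via the inclusion $\Omega\times\Omega\subseteq Q$) is a useful clarification that the paper leaves implicit.
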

	\begin{proof}
		Combining Theorem \eqref{eb}-$(i)$ with Theorem $\ref{bb}$, we obtain that the space $X$ is continuously embedded in $L^{q(x)}(\Omega)$.
	\end{proof}
	\begin{remark}$\label{dc}$
		Theorem $\ref{ea}$ remains true if we replace the space $X$ by $X_{0}$.
	\end{remark}
		\section{Existence Results}$\label{23}$
 In this section, we will establish the proof of the main existence result stated in Theorem \ref{3} by using the mountain pass theorem. To this end, wet  consider the energy functional $\mathcal{J}:X_{0}\longrightarrow\mathbb{R}$ associated to problem \hyperref[P]{($\mathcal{P}_s$)}, defined by  
	$$
	\mathcal{J}(u)=\int_{Q}\frac{1}{p(x,y)}\,\frac{|u(x)-u(y)|^{p(x,y)}}{|x-y|^{N+sp(x,y)}}\,dxdy+\int_{\Omega}b(x)\,\frac{|u(x)|^{\overline{p}(x)}}{\overline{p}(x)}dx-\int_{\Omega}F(x,u)\,dx.
	$$
	We set $$
	\mathcal{K}(u)=\int_{\Omega}F(x,u)\,dx.
	$$
	The fonctional $\mathcal{K}$ has the following basic properties.
	\begin{lemma}$\label{2}$ 	Under the same assumptions of theorem $\ref{3}$ and
		suppose that \hyperref[H1]{$(H_{1})$} is verified. Then,
	\begin{itemize}
		\item [$(i)$] $\mathcal{K}$ is strongly continuous on $X_{0}$.
		\item [$(ii)$] $\mathcal{K}\in \mathscr{C}^{1}(X_{0},\mathbb{R})$ and
		$$
		\left\langle \mathcal{K}^{'}(u),\phi\right\rangle =\int_{\Omega}f(x,u)\phi\,dx,\quad \text{for all }u,\phi\in X_{0}.
		$$
	\end{itemize}
	\end{lemma}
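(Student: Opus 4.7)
The plan is to rely crucially on Theorem \ref{ea} (with Remark \ref{dc}), which yields the continuous embeddings $X_{0}\hookrightarrow L^{q(x)}(\Omega)$ and $X_{0}\hookrightarrow L^{h(x)}(\Omega)$, both valid because $\overline{p}(x)\le q(x)<p_s^*(x)$ and $\overline{p}(x)\le h(x)<p_s^*(x)$. The main analytic ingredient will be a generalized H\"older inequality with the conjugate pair $\bigl(r(x),\tfrac{r(x)}{r(x)-1}\bigr)$: if $g\in L^{r(x)}(\Omega)$ and $v\in L^{h(x)/q(x) \cdot (r(x)-1)/r(x)}$-type space, then $g(x)|v|^{q(x)}\in L^{1}(\Omega)$, and by Proposition~\ref{11} the $L^{r(x)/(r(x)-1)}$-norm of $|v|^{q(x)}$ is controlled by $\|v\|_{L^{h(x)}(\Omega)}^{\bar q}$ for some $\bar q\in[q^{-},q^{+}]$. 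This is exactly the reason $h$ was introduced in $(H_{1})$.

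For item $(i)$, let $u_{n}\to u$ in $X_{0}$. By the embedding $X_{0}\hookrightarrow L^{h(x)}(\Omega)$, $u_{n}\to u$ in $L^{h(x)}(\Omega)$. Extract a subsequence (still denoted $u_{n}$) converging a.e.\ and dominated by a function $w\in L^{h(x)}(\Omega)$ (the variable-exponent dominated convergence argument). Integrating $(H_{1})$ gives $|F(x,t)|\le \tfrac{g(x)}{q^{-}}|t|^{q(x)}$, hence
\[
|F(x,u_{n}(x))|\le \frac{g(x)}{q^{-}}\bigl(w(x)\bigr)^{q(x)},
\]
and the right-hand side belongs to $L^{1}(\Omega)$ by H\"older combined with Proposition~\ref{11}. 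Since $F(x,\cdot)$ is continuous, $F(x,u_{n})\to F(x,u)$ a.e., and dominated convergence yields $\mathcal K(u_{n})\to\mathcal K(u)$. Strong continuity along the full sequence follows from the usual subsequence-of-subsequence argument.

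For the Gateaux derivative in $(ii)$, fix $u,\phi\in X_{0}$ and $t\in(-1,1)\setminus\{0\}$. Write, via the fundamental theorem of calculus,
\[
\frac{\mathcal K(u+t\phi)-\mathcal K(u)}{t}=\int_{\Omega}\int_{0}^{1}f(x,u+st\phi)\,\phi\,ds\,dx.
\]
By $(H_{1})$ the integrand is bounded in absolute value by $g(x)\bigl(|u|+|\phi|\bigr)^{q(x)-1}|\phi|$, which is integrable by the generalized H\"older inequality with exponents $r(x)$, $\tfrac{h(x)}{q(x)-1}$, and $h(x)$ (all three reciprocals summing to $1$ by the definition of $h$), combined with the embeddings $X_{0}\hookrightarrow L^{h(x)}(\Omega)$. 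Letting $t\to 0$, dominated convergence yields $\langle \mathcal K'(u),\phi\rangle=\int_{\Omega}f(x,u)\phi\,dx$. For continuity of $\mathcal K':X_{0}\to X_{0}^{*}$, take $u_{n}\to u$ in $X_{0}$ and estimate
\[
\|\mathcal K'(u_{n})-\mathcal K'(u)\|_{X_{0}^{*}}\le \sup_{\|\phi\|_{X_{0}}\le 1}2\|f(\cdot,u_{n})-f(\cdot,u)\|_{L^{\hat h(x)}(\Omega)}\|\phi\|_{L^{h(x)}(\Omega)},
\]
where $\hat h$ is the conjugate exponent of $h$. The embedding bound $\|\phi\|_{L^{h(x)}(\Omega)}\le C\|\phi\|_{X_{0}}$ is given by Theorem~\ref{ea}, so the job reduces to showing continuity of the Nemytskii operator $u\mapsto f(\cdot,u)$ from $L^{h(x)}(\Omega)$ to $L^{\hat h(x)}(\Omega)$. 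This is obtained again by a.e.\ convergence along subsequences together with a dominating function of the form $C\bigl(g(x)w(x)^{q(x)-1}\bigr)^{\hat h(x)}$, whose integrability is a direct H\"older computation using the identity $(q(x)-1)\hat h(x)\cdot\tfrac{r(x)}{r(x)-1}\le \text{const}\cdot h(x)$ built into the definition of $h$.

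The main obstacle I anticipate is the last step: verifying that the integrability of the dominating function for $|f(x,u_{n})|^{\hat h(x)}$ is preserved by the generalized H\"older inequality, which requires carefully unwinding the relation $h(x)=\tfrac{r(x)q(x)}{r(x)-1}$ and combining it with Proposition~\ref{11} to convert modular bounds into norm bounds; everything else is essentially the variable-exponent version of the classical Nemytskii argument.
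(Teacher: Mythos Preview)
There is a genuine gap in your treatment of part $(i)$. In this context ``strongly continuous'' means \emph{weakly sequentially continuous}: the paper's own proof begins with ``Assume that $u_{n}\rightharpoonup u$ in $X_{0}$'', not $u_{n}\to u$. Your argument, however, assumes strong convergence in $X_{0}$, uses the continuous embedding $X_{0}\hookrightarrow L^{h(x)}(\Omega)$ to get strong convergence in $L^{h(x)}(\Omega)$, and then extracts an a.e.\ convergent subsequence with an $L^{h(x)}$ dominating function $w$. None of this is available from mere weak convergence: a continuous (non-compact) embedding only transfers weak convergence to weak convergence, and on an unbounded domain there is no global compact embedding to upgrade it.

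This is precisely the ``lack of compactness'' difficulty emphasized in the introduction. The paper recovers compactness by a spatial splitting: set $\Omega_{k}=\{|x|<k\}$ and decompose $\mathcal K(u_{n})-\mathcal K(u)$ over $\Omega_{k}$ and $\Omega\setminus\Omega_{k}$. On the bounded piece $\Omega_{k}$, the compact embedding of Theorem~\ref{ba} turns $u_{n}\rightharpoonup u$ into $u_{n}\to u$ in $L^{q(x)}(\Omega_{k})$, and dominated convergence applies there. On the tail $\Omega\setminus\Omega_{k}$, one uses H\"older with the pair $(r(x),h(x)/q(x))$ together with the crucial fact that $g\in L^{r(x)}(\Omega)$ implies $\|g\|_{L^{r(x)}(\Omega\setminus\Omega_{k})}\to 0$ as $k\to\infty$, while $\||u_{n}|^{q(x)}\|_{L^{h(x)/q(x)}}$ stays bounded by Proposition~\ref{11} and the boundedness of $(u_{n})$ in $X_{0}$. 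Your proposal never invokes the integrability of $g$ at infinity, which is the whole reason the hypothesis $g\in L^{r(x)}(\Omega)$ (and not merely $g\in L^{\infty}$) appears in $(H_{1})$.

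Your treatment of part $(ii)$ is essentially correct and parallel to the paper's: the paper uses the mean value theorem where you use the integral remainder, and it pairs $f(\cdot,u)$ with $\phi$ via the $(L^{\hat q(x)},L^{q(x)})$ duality rather than your $(L^{\hat h(x)},L^{h(x)})$ duality, but both routes work. The integrability check you flag as the ``main obstacle'' is handled in the paper exactly by the H\"older splitting you outline, using the identity $\tfrac{1}{r(x)}+\tfrac{q(x)}{h(x)}=1$; the computation goes through cleanly once written out.
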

	\begin{proof}
     $(i)$ Let $\Omega_{k}=\{x\in\Omega:|x|<k\}$. By \hyperref[H1]{$(H_{1})$}, $\mathcal{K}$ is well defined. Assume that $u_{n}\rightharpoonup u $ in $X_{0}$. We have
		\begin{equation}\label{dd}
			\left|\mathcal{K}(u_{n})-\mathcal{K}(u)\right|\leq \int_{\Omega_{k}}|F(x,u_{n})-F(x,u)|\,dx+\frac{1}{q_{-}}\int_{\Omega\backslash\Omega_{k}}g(x)\left(|u_{n}(x)|^{q(x)}-|u(x)|^{q(x)}\right)\,dx.
		\end{equation}
		Since $ \overline{p}(x)\leq h(x)<p_{s}^{*}(x)~$, for all $x\in \overline{\Omega}$, then by Theorem $\ref{ea}$ and Remark $\ref{dc}$, $X_{0}$ is continuously  embedded in $L^{h(x)}(\Omega)$. Thus for all $u\in X_{0}$, we have $u\in L^{h(x)}(\Omega) $. Then one has 
		$$
		\int_{\Omega}\left(|u|^{q(x)}\right)^{\frac{h(x)}{q(x)}}\,dx=\int_{\Omega}|u|^{h(x)}\,dx<+\infty.
		$$ 
      Hence $|u|^{q(x)}\in L^{\frac{h(x)}{q(x)}}(\Omega)$. Since $\{u_{n}\}$ is bounded in $X_{0}$, then $\{u_{n}\}$ is bounded in $L^{h(x)}(\Omega)$. So, there exists a psitive constant $C$ such that 
      \begin{equation}\label{de}
      \max\left\{\||u_{n}|^{q(x)}\|_{\frac{h(x)}{q(x)}},\||u|^{q(x)}\|_{\frac{h(x)}{q(x)}}\right\}\leq C.
    \end{equation}
      Since $g\in L^{r(x)}(\Omega)$. Then
     \begin{equation}\label{10}
      	\|g\|_{L^{r(x)}(\Omega\backslash\Omega_{k})}\longrightarrow 0\quad \text{as }\quad k\longrightarrow +\infty.
    \end{equation}
      Then for all $\epsilon>0$ and $k$ large enough, we get 
      \begin{equation}\label{df}
      	\|g\|_{L^{r(x)}(\Omega\backslash\Omega_{k})}\leq \frac{q^{-}\epsilon}{8C}.
      \end{equation}
       As  $\quad q(x)<p_{s}^{*}(x) \quad $  for all  $x\in\overline{\Omega}$, then, by Theorem $\ref{ba}$,  the embedding $W^{s,p(x,y)}(\Omega_{k})\hookrightarrow L^{q(x)}(\Omega_{k})$ is compact. Hence $u_{n}\rightharpoonup u$ in $W^{s,p(x,y)}(\Omega_{k})$ implies that there exists a subsequence still denote by $\{u_{n}\}$ such that $\quad u_{n}\longrightarrow u$ in $L^{q(x)}(\Omega_{k})$. From $(H_{1})$,we get
      $$
      |F(x,u_{n})|\leq \frac{1}{q^{-}}\,g(x)|u_{n}|^{q(x)}.
      $$
      By the dominated convergence theorem, we obtain 
      $$
      \int_{\Omega_{k}}|F(x,u_{n})-F(x,u)|\,dx\longrightarrow 0\quad \text{as}\quad n\longrightarrow +\infty.
      $$
      Hence, there exists $n_{1}>0$ such that for any $n\geq n_{1}$, one has 
      \begin{equation}\label{dg}
      	\int_{\Omega_{k}}|F(x,u_{n})-F(x,u)|\,dx\leq \frac{\epsilon}{2}.
      \end{equation}
      Combining $(\ref{dd})-(\ref{dg})$ with Hölder inequality, we get
      \begin{align*}
      	\left|\mathcal{K}(u_{n})-\mathcal{K}(u)\right|&\leq\frac{\epsilon}{2}+\frac{2}{q^{-}}\|g\|_{L^{r(x)}(\Omega\backslash\Omega_{k})}\left(\||u_{n}|^{q(x)}\|_{L^{\frac{h(x)}{q(x)}}(\Omega\backslash\Omega_{k})}+\||u|^{q(x)}\|_{L^{\frac{h(x)}{q(x)}}(\Omega\backslash\Omega_{k})}\right)\\
      	&\leq \frac{\epsilon}{2}+\frac{4C}{q^{-}}\,\frac{q^{-}\epsilon}{8C}=\epsilon.
      \end{align*}
      We deduce that 
      $$
      \mathcal{K}(u_{n})\longrightarrow \mathcal{K}(u)\quad \text{as}\quad n\longrightarrow+\infty.
      $$
      $(ii)$ For any $u,\phi \in X_{0}$, we have 
      \begin{equation}	\label{di}
       \left\langle \mathcal{K}^{'}(u),\phi\right\rangle =\lim_{t \to 0}\frac{\mathcal{K}(u+t\phi)-\mathcal{K}(u)}{t}
      =\lim_{t \to 0}\int_{\Omega}\frac{F(x,u+t\phi)-F(x,u)}{t}\,dx.
      \end{equation}
      Let us consider 
     $$\begin{array}{rl} M:[0,1]&\longrightarrow\mathbb{R}\\
  \alpha &\longmapsto\frac{F(x,u+t\alpha\phi)-F(x,u)}{t}.
     \end{array}$$
      The function $M$ is continuous on $[0,1]$, and differentiable on $(0,1)$. Then, by the mean value theorem, there exists $\theta\in (0,1)$ such that 
      $$
      M^{'}(\theta)=M(1)-M(0).
      $$
     That is,
      \begin{equation}\label{dj}
      	f(x,u+t\theta\phi)\phi=\frac{F(x,u+t\phi)-F(x,u)}{t}.
      \end{equation}
      Combining $(\ref{di})$ and $(\ref{dj})$, we get 
      $$
      \left\langle \mathcal{K}^{'}(u),\phi\right\rangle =\lim_{t \to 0}\int_{\Omega}f(x,u+t\theta\phi)\phi\,dx.
      $$
      Since $\,\, t,\theta\in[0,1]$, so $t\theta\leq 1$, which implies
      $$
      |f(x,u+t\theta\phi)\phi|\leq 2^{q^{+}-1}\left(g(x)|u|^{q(x)-1}\phi+g(x)|\phi|^{q(x)}\right).
      $$
      By Hölder inequality, we have
      $$
      \int_{\Omega}g(x)|\phi|^{q(x)}\,dx\leq 2\|g\|_{L^{r(x)}(\Omega)}\||\phi|^{q(x)}\|_{L^{\frac{h(x)}{q(x)}}}<+\infty,
      $$
      where $\tfrac{1}{r(x)}+\tfrac{q(x)}{h(x)}=1$.\\
     Next, we claim that 
      \begin{equation}\label{1}
      \int_{\Omega}g(x)|u|^{q(x)-1}|\phi|\,dx\leq 2\|g|u|^{q(x)-1}\|_{L^{\hat{h}(x)}(\Omega)}\|\phi\|_{L^{h(x)}(\Omega)}<+\infty,
      \end{equation}
      where $\tfrac{1}{h(x)}+\tfrac{1}{\hat{h}(x)}=1$. Indeed, by Hölder inequality, we obtain
      $$
      \int_{\Omega}\left(g(x)|u|^{q(x)-1}\right)^{\hat{h}(x)}dx\leq 2\||g|^{\hat{h}(x)}\|_{L^{\frac{r(x)}{\hat{h}(x)}}(\Omega)}\||u|^{(q(x)-1)\hat{h}(x)}\|_{L^{w(x)}(\Omega)},
      $$
      where $\tfrac{1}{w(x)}+\tfrac{\hat{h}(x)}{r(x)}=1$. One has 
     $$
      \int_{\Omega}\left(|g|^{\hat{h}(x)}\right)^{\frac{r(x)}{\hat{h}(x)}}\,dx=\int_{\Omega}|g|^{r(x)}\,dx<+\infty \quad \text{and }\quad \int_{\Omega}|u|^{(q(x)-1)w(x)\hat{h}(x)}\,dx=\int_{\Omega}|u|^{h(x)}\,dx<+\infty.      $$
      Then $(\ref{1})$ hold true. On the other hand, since $f$ is a Carathéodory function, we have
      $$
      \lim_{t \to 0} f(x,u+t\theta\phi)\phi=f(x,u)\phi.
      $$
      Hence, by the dominated convergence theorem, we deduce that
      $$
      \left\langle \mathcal{K}^{'}(u),\phi\right\rangle =\int_{\Omega} f(x,u)\phi\,dx \quad \text{for all }u,\phi \in X_{0}.
      $$
      \par Next, Suppose that $\quad u_{n}\longrightarrow u \text{ in }X_{0}$, and we show that $\quad \mathcal{K}^{'}(u_{n})\longrightarrow\mathcal{K}^{'}(u) \text{ in }X_{0}^{*}$.\\
      Again, by Hölder inequality, we have 
    \begin{align*}
    	\left| \left\langle \mathcal{K}^{'}(u_{n})-\mathcal{K}^{'}(u),\phi\right\rangle \right| &=\int_{\Omega}\left|f(x,u_{n})-f(x,u)\right||\phi|\,dx\\
    	&\leq 2\|f(.,u_{n})-f(.,u)\|_{L^{\hat{q}(x)}(\Omega)}\|\phi\|_{L^{q(x)}(\Omega)},
    \end{align*}
    where $\tfrac{1}{q(x)}+\tfrac{1}{\hat{q}(x)}=1$. Then
    $$
    \|\mathcal{K}^{'}(u_{n})-\mathcal{K}^{'}(u)\|_{X_{0}^{*}}\leq 2 \|f(\cdot,u_{n})-f(\cdot,u)\|_{L^{\hat{q}(x)}(\Omega)}.
    $$
    Since$\quad u_{n}\longrightarrow u \text{ in }X_{0}$, then $\quad u_{n}\longrightarrow u \text{ in } L^{q(x)}(\Omega)$. Hence, there exists a subsequence still denote by $\{u_{n}\}$ such that$ \quad u_{n}(x)\longrightarrow u(x) \text{ a.e. in }\Omega$ and there exists $v $ in $L^{q(x)}(\Omega)$ such that $ |u_{n}(x)|\leq v(x)$. Then, we have
    $$
    f(x,u_{n}(x))\longrightarrow f(x,u(x)) \quad \text{a.e. in }\Omega \quad \text{and }\,\,|f(x,u_{n}(x))|\leq g(x)|v(x)|^{q(x)-1}.
    $$
    Thus, by the dominated convergence theorem, we deduce that 
    $$
    f(\cdot,u_{n})\longrightarrow f(\cdot ,u) \quad \text{ in }L^{\hat{q}(x)}(\Omega)\quad \text{as }n\longrightarrow +\infty.
    $$
    Consequently
    $$
    \mathcal{K}^{'}(u_{n})\longrightarrow \mathcal{K}^{'}(u)\quad \text{in }X_{0}^{*}.
    $$
    Which complete the proof. \\
	\end{proof}
\noindent By the same argument as in the proof of Lemma $\ref{2}-(ii)$, we have that $\mathcal{J}\in \mathscr{C}^{1}(X_{0},\mathbb{R})	$ is well defined and its G$\hat{a}$teau derivative is given by
	\begin{align*}	
	\left\langle \mathcal{J}^{'}(u),\phi\right\rangle &=\int_{Q}\tfrac{|u(x)-u(y)|^{p(x,y)-2}(u(x)-u(y))(\phi(x)-\phi(y))}{|x-y|^{N+sp(x,y)}}\,dxdy+\int_{\Omega}b(x)|u|^{\overline{p}(x)-2}u(x)\phi(x)\,dx\\
	&\hspace{4cm}-\int_{\Omega}f(x,u)\phi\,dx,
 \end{align*} 
   for all $u,\phi\in X_{0}$. Thus, the weak solution of $(\mathcal{P}_{s})$ corresponds to the critical points of $\mathcal{J}$.\\
   
 \noindent Let us defined the operator $\widetilde{\mathscr{L}}$ as follows 
    $$
   \widetilde{\mathscr{L}}:X\longrightarrow X^{*}
   $$
   $$
   \hspace{3cm}u\longrightarrow \widetilde{\mathscr{L}}(u):X\longrightarrow \mathbb{R}
   $$
   $$
   \hspace{7cm} \phi\longrightarrow \left\langle \widetilde{\mathscr{L}}(u),\phi\right\rangle 
   $$
   such that  
   $$
   \left\langle \widetilde{\mathscr{L}}(u),\phi\right\rangle =\int_{Q}\tfrac{|u(x)-u(y)|^{p(x,y)-2}(u(x)-u(y))(\phi(x)-\phi(y))}{|x-y|^{n+sp(x,y)}}\,dxdy+\int_{\Omega}b(x)|u|^{\overline{p}(x)-2}u(x)\phi(x)dx,
   $$
 where $\left\langle \cdot,\cdot\right\rangle$ denotes the usual duality between $X_{0}$ and its dual space $X_0^*$.
   \begin{lemma}$\label{19}$
   	Under the same assumptions of theorem $\ref{3}$. Then, following assertions hold: \begin{itemize}
   	\item[$(i)$] $\widetilde{\mathscr{L}}$ is bounded and strictly monotone operator,
   	\item[$(ii)$] $\widetilde{\mathscr{L}}$ is of type $(S_{+})$, that is, if $u_{k}\rightharpoonup u$ in $X_{0}$ and $\displaystyle\limsup_{k\to +\infty}\left\langle \widetilde{\mathscr{L}}(u_{k})-\widetilde{\mathscr{L}}(u),u_{k}-u\right\rangle \leq 0$, then $u_{k}\longrightarrow u$ in $X_{0}$.
   	\item[$(iii)$] $\widetilde{\mathscr{L}}$ is a homeomorphism. 	
   	   	\end{itemize}
   \end{lemma}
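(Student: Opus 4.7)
The plan is to decompose $\widetilde{\mathscr{L}}=\mathscr{L}+\mathscr{B}$, where $\mathscr{L}$ is the fractional $p(x,\cdot)$-Laplacian operator already analyzed in the preceding lemma (cited from \cite{Bahrouni1}) and
$$\langle\mathscr{B}(u),\phi\rangle=\int_{\Omega}b(x)|u(x)|^{\bar p(x)-2}u(x)\phi(x)\,dx,$$
and to transfer the three properties from each summand to the sum. Boundedness in $(i)$ follows by Hölder's inequality applied separately to each integrand: for the nonlocal part with exponents $p(x,y)$ and $p'(x,y)$ against the measure $|x-y|^{-N}\,dxdy$, and for the local part via $b\in L^\infty(\Omega)$ and Hölder in $L^{\bar p(x)}(\Omega)$; in each case Proposition \ref{5} converts the resulting modular bound into a norm bound, yielding $\|\widetilde{\mathscr{L}}(u)\|_{X^*}\le C(\|u\|_{X}^{p^{+}-1}+\|u\|_{X}^{p^{-}-1})$. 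Strict monotonicity rests on the elementary pointwise inequality $(|a|^{p-2}a-|b|^{p-2}b)(a-b)>0$ for $a\ne b$ and $p>1$: applied inside both integrals with $p=p(x,y)$ or $p=\bar p(x)$, the integrands are nonnegative and vanish only when $u=v$ a.e., whence $\langle\widetilde{\mathscr{L}}(u)-\widetilde{\mathscr{L}}(v),u-v\rangle>0$ for $u\ne v$.

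For the $(S_+)$ property in $(ii)$, the crucial point is that the two contributions
$$\langle\mathscr{L}(u_k)-\mathscr{L}(u),u_k-u\rangle\qquad\text{and}\qquad\langle\mathscr{B}(u_k)-\mathscr{B}(u),u_k-u\rangle$$
are \emph{individually} nonnegative by the monotonicity just established. Consequently, the assumption $\limsup_{k\to+\infty}\langle\widetilde{\mathscr{L}}(u_k)-\widetilde{\mathscr{L}}(u),u_k-u\rangle\le 0$ forces each summand to have nonpositive limsup. The $(S_+)$ property of $\mathscr{L}$, cited from \cite{Bahrouni1}, then yields $u_k\to u$ in $X_0$, since the norm on $X_0$ coincides with the Gagliardo seminorm $[\,\cdot\,]_{X}$.

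Finally, claim $(iii)$ will be obtained through a Browder--Minty style argument. The operator $\widetilde{\mathscr{L}}$ is bounded (by $(i)$), strictly monotone (by $(i)$), hemicontinuous (the map $t\mapsto\langle\widetilde{\mathscr{L}}(u+t\phi),\psi\rangle$ is continuous by dominated convergence), and coercive: indeed
$$\langle\widetilde{\mathscr{L}}(u),u\rangle=\int_{Q}\frac{|u(x)-u(y)|^{p(x,y)}}{|x-y|^{N+sp(x,y)}}\,dxdy+\int_{\Omega}b(x)|u(x)|^{\bar p(x)}\,dx\ge \min(1,b_0)\,\rho_{_{X}}(u),$$
and combining with the modular--norm inequalities of Proposition \ref{5} (applied to $\rho_{_{X}}$) gives $\langle\widetilde{\mathscr{L}}(u),u\rangle/\|u\|_{X}\to+\infty$ as $\|u\|_{X}\to+\infty$. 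These properties imply that $\widetilde{\mathscr{L}}$ is a bijection; continuity of the inverse follows from $(ii)$ by the standard argument: if $f_n\to f$ in $X_0^*$ and $u_n=\widetilde{\mathscr{L}}^{-1}(f_n)$, then coercivity bounds $\{u_n\}$ in $X_0$, hence along a subsequence $u_n\rightharpoonup u$, and $\langle\widetilde{\mathscr{L}}(u_n)-\widetilde{\mathscr{L}}(u),u_n-u\rangle=\langle f_n,u_n-u\rangle-\langle\widetilde{\mathscr{L}}(u),u_n-u\rangle\to 0$, so the $(S_+)$ property gives $u_n\to u$ strongly. The main obstacle I anticipate is the careful bookkeeping between the space $X$ on which $\widetilde{\mathscr{L}}$ is formally defined and the subspace $X_0$ where $(ii)$ and the variational analysis take place; the decomposition strategy is, however, stable under restriction to $X_0$, since $b_0>0$ guarantees that the local term contributes a genuinely coercive lower bound.
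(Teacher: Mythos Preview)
Your proposal is correct and follows essentially the same approach that the paper intends: the paper's own ``proof'' consists solely of the sentence ``Similar to proof of \cite[Lemma 4.2]{Bahrouni1}'', and your decomposition $\widetilde{\mathscr{L}}=\mathscr{L}+\mathscr{B}$ together with the Browder--Minty argument is precisely the natural elaboration of that reference adapted to include the local weighted term. Your remark about the bookkeeping between $X$ and $X_0$ is apt---the paper itself is slightly loose on this point---but, as you observe, the restriction to $X_0$ causes no difficulty.
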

   \begin{proof}
   	Similar to proof of  \cite[Lemma $4.2$]{Bahrouni1}.
   \end{proof}
 Let us first verify that the functional $J$ satisfies the mountain pass geometry.
   \begin{lemma}\label{l43}
   	Under the same assumptions of theorem $\ref{3}$, there exists $R>0$ and $a>0$ such that 
   	$$
   	\mathcal{J}(u)\geq a>0 \quad \text{for any }u\in X_{0}\text{ with }\|u\|_{X_{0}}=R.
   	$$
   \end{lemma}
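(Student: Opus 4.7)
The plan is to verify the mountain-pass geometric condition $\mathcal{J}(u) \geq a > 0$ on a small sphere $\|u\|_{X_{0}} = R$ by showing that the modular lower bound on the two leading terms of $\mathcal{J}$ (of order $R^{p^{+}}$) strictly dominates the upper bound on the nonlinear term (of order $R^{q^{-}}$). The decisive quantitative input will be the hypothesis $q^{-} > p^{+}$ from $(H_{1})$.

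I fix $u \in X_{0}$ with $\|u\|_{X_{0}} = [u]_{X} = R < 1$, to be chosen later. Bounding $1/p(x,y) \geq 1/p^{+}$ and applying the modular-seminorm inequality for $[\cdot]_{X}$ (the analogue for $X_{0}$ of the lemma stated for $W_{0}$, with integration over $Q$), I obtain
$$
\int_{Q} \frac{1}{p(x,y)}\,\frac{|u(x)-u(y)|^{p(x,y)}}{|x-y|^{N+sp(x,y)}}\,dxdy \;\geq\; \frac{1}{p^{+}}\,[u]_{X}^{p^{+}} \;=\; \frac{1}{p^{+}}\,\|u\|_{X_{0}}^{p^{+}}.
$$
The term $\int_{\Omega} b(x)|u|^{\overline{p}(x)}/\overline{p}(x)\,dx$ is nonnegative and will simply be dropped, giving the lower bound $\mathcal{J}(u) \geq \tfrac{1}{p^{+}} \|u\|_{X_{0}}^{p^{+}} - \mathcal{K}(u)$.

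For the nonlinear part, $(H_{1})$ yields $|F(x,u)| \leq g(x)|u|^{q(x)}/q^{-}$. Applying Hölder's inequality with the conjugate pair $(r(x), h(x)/q(x))$ (conjugate because $h(x) = r(x)q(x)/(r(x)-1)$), combined with Proposition \ref{11}, I get
$$
\mathcal{K}(u) \;\leq\; \frac{2}{q^{-}}\,\|g\|_{L^{r(x)}(\Omega)}\,\|u\|_{L^{h(x)}(\Omega)}^{\overline{q}}, \qquad \overline{q} \in [q^{-}, q^{+}].
$$
Because $\overline{p}(x) \leq h(x) < p_{s}^{*}(x)$, Theorem \ref{ea} together with Remark \ref{dc} provides the continuous embedding $X_{0} \hookrightarrow L^{h(x)}(\Omega)$, so $\|u\|_{L^{h(x)}(\Omega)} \leq C\|u\|_{X_{0}}$. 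Taking $R$ so small that $CR \leq 1$ forces $\|u\|_{L^{h(x)}(\Omega)} \leq 1$, which allows the factor $\|u\|_{L^{h(x)}}^{\overline{q}}$ to be controlled by $\|u\|_{L^{h(x)}}^{q^{-}}$, giving $\mathcal{K}(u) \leq C_{1} \|u\|_{X_{0}}^{q^{-}}$.

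Putting the bounds together,
$$
\mathcal{J}(u) \;\geq\; R^{p^{+}}\left( \frac{1}{p^{+}} - C_{1} R^{\,q^{-} - p^{+}} \right).
$$
Since $q^{-} > p^{+}$ by $(H_{1})$, the parenthesized quantity is strictly positive for $R > 0$ sufficiently small, so I may set $a := R^{p^{+}}/(2p^{+})$. The main delicate point is the handling of the exponent $\overline{q}$ supplied by Proposition \ref{11}: one must restrict to a ball small enough that $\|u\|_{L^{h(x)}} \leq 1$ in order to replace $\overline{q}$ by $q^{-}$. Beyond that, the argument is routine modular manipulation combined with the embedding from Theorem \ref{ea}.
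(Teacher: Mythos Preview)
Your proof is correct and follows the same overall strategy as the paper (show that the principal term of order $R^{p^{+}}$ dominates $\mathcal{K}(u)$, which is of order $R^{q^{-}}$, for small $R$, using $q^{-}>p^{+}$), but the technical execution differs in two respects. First, you discard the nonnegative $b$-term and work only with the Gagliardo seminorm, whereas the paper retains both contributions through the full modular $\rho_{X}$ and the constant $\min\{1,b_{0}\}$; your simplification is harmless since $\|u\|_{X_{0}}=[u]_{X}$. Second, and more substantively, to bound $\mathcal{K}(u)$ the paper exploits the $L^{\infty}$ part of the hypothesis on $g$, writing $|F(x,u)|\le \|g\|_{\infty}\,|u|^{q(x)}/q^{-}$ and then invoking the embedding $X_{0}\hookrightarrow L^{q(x)}(\Omega)$ together with the modular inequality in $L^{q(x)}$; you instead use the $L^{r(x)}$ part of the hypothesis, apply H\"older with the conjugate pair $\bigl(r(\cdot),h(\cdot)/q(\cdot)\bigr)$, invoke Proposition~\ref{11}, and embed into $L^{h(x)}(\Omega)$. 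The paper's route is shorter and avoids both the H\"older step and the handling of the floating exponent $\overline{q}$; your route, on the other hand, would still go through if only $g\in L^{r(x)}(\Omega)$ were assumed (without $g\in L^{\infty}$), so it is marginally more general.
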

   \begin{proof}
   	For any $u\in X_{0}$, we have 
    \begin{align*}
    \mathcal{J}(u)&=\int_{Q}\frac{1}{p(x,y)}\,\frac{|u(x)-u(y)|^{p(x,y)}}{|x-y|^{N+sp(x,y)}}\,dxdy+\int_{\Omega}b(x)\,\frac{|u(x)|^{\overline{p}(x)}}{\overline{p}(x)}dx-\int_{\Omega}F(x,u)\,dx\\
    &\geq \frac{\min\{1,b_{0}\}}{p^{+}}\rho_{_{X}}(u)-\int_{\Omega}F(x,u)\,dx.
\end{align*}
   By $(H_{1})$, we have
   \begin{equation}\label{6}
   |F(x,t)|\leq \frac{\|g\|_{\infty}}{q^{-}}|t|^{q(x)}\quad \text{for all }(x,t)\in \Omega\times \mathbb{R}.
\end{equation}
   Since$\quad \overline{p}(x)<q(x)<p_{s}^{*}(x)$ for all $x\in \overline{\Omega}$, then, by Remark $\ref{dc}$, we have that $X_{0}$ is continuously embedded in $L^{q(x)}(\Omega)$, that is, there exists a positive constant $c_{8}$ such that 
   \begin{equation}\label{4}
   	\|u\|_{L^{q(x)}(\Omega)}\leq c_{8}\|u\|_{X_{0}}, \quad \text{for all }u\in X_{0}.
   \end{equation}
   We fix $R\in(0,1)$ for which $R<\dfrac{1}{c_{8}}$. Then, relation $(\ref{4})$ implies 
   $$
   \|u\|_{L^{q(x)}(\Omega)}<1 \quad \text{for all }u\in X_{0}\text{ with }R=\|u\|_{X_{0}}.
   $$
   By Proposition $\ref{5}$, we have
   \begin{equation}\label{7}
   	\int_{\Omega}|u(x)|^{q(x)}\,dx\leq \|u\|_{L^{q(x)}(\Omega)}^{q^{-}}\quad \text{for all }u\in X_{0}\text{ with }R=\|u\|_{X_{0}}.
   \end{equation}
   Since $\|u\|_{X_{0}}\leq 1$, combining $(\ref{6})-(\ref{7})$, we deduce that for all $u\in X_{0}$ with $R=\|u\|_{X_{0}}$
   \begin{align*}
   	\mathcal{J}(u)&\geq \frac{\min\{1,b_{0}\}}{p^{+}}\|u\|_{X_{0}}^{p^{+}}-\frac{\|g\|_{\infty}}{q^{-}}\int_{\Omega}|u|^{q(x)}\,dx\\
   	&\geq\frac{\min\{1,b_{0}\}}{p^{+}}\|u\|_{X_{0}}^{p^{+}}-\frac{\|g\|_{\infty}}{q^{-}}c_{8}^{q^{-}}\|u\|_{X_{0}}^{q^{-}}\\
   	&\geq\frac{\min\{1,b_{0}\}}{p^{+}}R^{p^{+}}-\frac{\|g\|_{\infty}}{q^{-}}c_{8}^{q^{-}}R^{q^{-}}\\
   	&\geq R^{p^{+}}\left(\frac{\min\{1,b_{0}\}}{p^{+}}-\frac{\|g\|_{\infty}}{q^{-}}c_{8}^{q^{-}}R^{q^{-}-p^{+}}\right).
   \end{align*}
   As $p^{+}<q^{-}$, then for any $u\in X_{0}$ with $\|u\|_{X_{0}}=R<\min\left\{\dfrac{1}{c_{8}},\left(\dfrac{\min\{1,b_{0}\}}{2p^{+}}.\dfrac{q^{-}}{\|g\|_{\infty}c_{8}^{q^{-}}}\right)^{\frac{1}{q^{-}-p^{+}}}\right\}$, there exists $a=\dfrac{\min\{1,b_{0}\}}{2p^{+}}R^{p^{+}}$ such that 
   $\mathcal{J}(u)\geq a>0. $
   which  complete the proof.
   \end{proof}
   \begin{lemma}\label{l44}
   	Under the same assumptions of theorem $\ref{3}$, there exists $u_{0}\in X_{0}$ with $\|u_{0}\|_{X_{0}}>R$ such that $\mathcal{J}(u_{0})<0$.
   \end{lemma}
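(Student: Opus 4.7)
The plan is to invoke the Ambrosetti--Rabinowitz-type hypothesis \hyperref[H2]{$(H_{2})$} to show that $\mathcal{J}$ decays to $-\infty$ along a suitable ray in $X_{0}$, from which choosing $u_{0}$ is immediate. To do this, I would first fix any $v \in C_{0}^{\infty}(\Omega) \subset X_{0}$ with $v \geq 0$ and $v \not\equiv 0$, and study $\mathcal{J}(tv)$ as $t \to +\infty$.

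The first step is to extract the standard superlinear lower bound on $F$ from \hyperref[H2]{$(H_{2})$}. For $t > 0$, the inequality $\mu F(x,t) \leq t f(x,t) = t\,\partial_{t} F(x,t)$ can be rewritten as $\frac{d}{dt}\bigl[t^{-\mu} F(x,t)\bigr] \geq 0$, so $t \mapsto t^{-\mu} F(x,t)$ is nondecreasing on $(0,+\infty)$. This yields
\[
F(x,t) \geq F(x,1)\, t^{\mu}, \qquad \text{for all } x \in \Omega,\ t \geq 1,
\]
and by \hyperref[H2]{$(H_{2})$} itself $F(x,1) > 0$ everywhere (and $F \geq 0$ on $\Omega \times \mathbb{R}$).

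Next, for $t > 1$, I would bound the first two terms of $\mathcal{J}(tv)$ from above using $t^{p(x,y)} \leq t^{p^{+}}$ and $t^{\overline{p}(x)} \leq t^{p^{+}}$, together with $b \in L^{\infty}(\Omega)$, to obtain
\[
\mathcal{J}(tv) \leq \frac{t^{p^{+}}}{p^{-}}\left( \int_{Q} \frac{|v(x)-v(y)|^{p(x,y)}}{|x-y|^{N+sp(x,y)}}\, dx\, dy + \|b\|_{\infty} \int_{\Omega} |v(x)|^{\overline{p}(x)}\, dx \right) - \int_{\Omega} F(x,tv)\, dx =: C_{1} t^{p^{+}} - \int_{\Omega} F(x,tv)\, dx,
\]
where $C_{1} = C_{1}(v) > 0$ is finite because $v \in X_{0}$ has compact support. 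Since $F \geq 0$, restricting the last integral to $\{x \in \Omega : t v(x) \geq 1\}$ and applying the lower bound from Step~1 gives
\[
\int_{\Omega} F(x,tv)\, dx \;\geq\; t^{\mu} \int_{\{tv \geq 1\}} F(x,1)\, v(x)^{\mu}\, dx.
\]
By monotone convergence, as $t \to +\infty$ the right-most integral tends to $\int_{\{v > 0\}} F(x,1)\, v(x)^{\mu}\, dx$, which is strictly positive because $v \not\equiv 0$ and $F(x,1) > 0$. Hence there exist $c_{2} > 0$ and $t_{1} \geq 1$ such that $\int_{\Omega} F(x,tv)\, dx \geq c_{2} t^{\mu}$ for all $t \geq t_{1}$.

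Putting these bounds together, $\mathcal{J}(tv) \leq C_{1} t^{p^{+}} - c_{2} t^{\mu}$ for $t \geq t_{1}$, and since $\mu > p^{+}$ by \hyperref[H2]{$(H_{2})$}, we conclude $\mathcal{J}(tv) \to -\infty$ as $t \to +\infty$. It then suffices to choose $t_{0}$ large enough that simultaneously $\mathcal{J}(t_{0} v) < 0$ and $\|t_{0} v\|_{X_{0}} = t_{0}\|v\|_{X_{0}} > R$, and set $u_{0} = t_{0} v$. I do not anticipate any substantial obstacle here: the argument is the classical AR trick, and the only minor point requiring care is the handling of the variable exponent $p(x,y)$ in the modular estimate (handled by separating the cases $t > 1$ and using $p^{+}$ as a uniform upper bound) and the verification that the set $\{x : tv(x) \geq 1\}$ eventually has positive measure, which follows from $v \not\equiv 0$.
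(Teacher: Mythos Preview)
Your proposal is correct and follows essentially the same route as the paper: fix a nonnegative, nontrivial $v\in C_0^\infty(\Omega)$, use \hyperref[H2]{$(H_2)$} to obtain a lower bound of the form $F(x,t)\gtrsim t^\mu$ for $t\ge 1$, bound the principal part of $\mathcal{J}(tv)$ by $C t^{p^+}$ via $t^{p(x,y)}\le t^{p^+}$, and conclude from $\mu>p^+$ that $\mathcal{J}(tv)\to-\infty$. If anything, your derivation of the $F$-lower bound (via monotonicity of $t^{-\mu}F(x,t)$ and monotone convergence on $\{tv\ge 1\}$) is more careful than the paper's, which simply asserts $F(x,tu)\ge c_9\,t^\mu|u|^\mu$ without tracking the $x$-dependence of the constant.
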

  \begin{proof}
  	For any $u\in X_{0}$, condition \hyperref[H2]{$(H_{2})$} implies that 
  	\begin{equation}\label{8}
  		F(x,tu)\geq c_{9}t^{\mu}|u|^{\mu} \quad \text{for all }t\geq 1. 
  	\end{equation}
  	Let $\phi \in C_{0}^{\infty}(\Omega)$, $\phi \geq 0$ and $\phi\neq 0$. By $(\ref{8})$, for any $t>1$, we get 
  	\begin{align*}
  		\mathcal{J}(t\phi)&=\int_{Q}\frac{t^{p(x,y)}}{p(x,y)}\frac{|\phi(x)-\phi(y)|^{p(x,y)}}{|x-y|^{N+sp(x,y)}}\,dxdy+\int_{\Omega}b(x)\frac{|t\phi|^{\overline{p}(x)}}{\overline{p}(x)}\,dx-\int_{\Omega}F(x,t\phi)\,dx\\
  		&\leq \frac{t^{p^{+}}}{p^{-}}\int_{Q}\frac{|\phi(x)-\phi(y)|^{p(x,y)}}{|x-y|^{N+sp(x,y)}}\,dxdy+\frac{t^{p^{+}}|b|_{\infty}}{p^{-}}\int_{\Omega}|\phi|^{\overline{p}(x)}dx-c_{9}t^{\mu}\int_{\Omega}|\phi|^{\mu}dx\\
  		&\leq\frac{\max\{1,|b|_{\infty}\}t^{p^{+}}}{p^{-}}\rho_{_{X}}(\phi)-c_{9}t^{\mu}\int_{\Omega}|\phi|^{\mu}dx.
  	\end{align*}
  	Since $\mu>p^{+}$, then
  	$$
  	\mathcal{J}(t\phi)\longrightarrow-\infty \quad \text{as }t\longrightarrow +\infty.
  	$$
  	Hence, for $t>1$ large enough, we can take $u_{0}=t\phi $ with $\|u_{0}\|_{X_{0}}>R$ and $\mathcal{J}(u_{0})<0$. This concludes the proof.
  \end{proof}
  \noindent  Now, we are in position to prove Theorem $\ref{3}$\\
  \begin{proof}[Proof of Theorem $\ref{3}$]
  	We need to prove that $\mathcal{J}$ satisfies the Palais-Small condition on $X_{0}$. Let $\{u_{n}\}\subset X_{0}$ be a sequence satisfying 
  	\begin{equation}\label{9}
  	\mathcal{J}(u_{n})\longrightarrow c_{10} \quad \text{and }\quad 
  	\mathcal{J}^{'}(u_{n})\longrightarrow 0 \quad \text{as }n\longrightarrow+\infty. 
  	\end{equation}
  	We first show that $\{u_{n}\}$ is bounded in $X_{0}$. Indeed, we assume the contrary. Then, passing eventually to a subsequence, still denote by $\{u_{n}\}$, we can suppose that $\|u_{n}\|_{X_{0}}\longrightarrow +\infty$ as $n\longrightarrow +\infty$. Thus we can consider that $\|u_{n}\|_{X_{0}}>1$ for all $n$. By $(\ref{9})$, \hyperref[H2]{$(H_{2})$}  and Proposition $\ref{5}$, for $n$ large enough, we get
  	\begin{align*}
  	c_{10}+\|u_{n}\|_{X_{0}}&\geq \mathcal{J}(u_{n})-\frac{1}{\mu}\left\langle \mathcal{J}^{'}(u_{n}),u_{n}\right\rangle \\
  	&\geq\int_{Q}\frac{|u_{n}(x)-u_{n}(y)|^{p(x,y)}}{p(x,y)|x-y|^{N+sp(x,y)}}\,dxdy+\int_{\Omega}b(x)\frac{|u(x)|^{\overline{p}(x)}}{\overline{p}(x)}\,dx-\int_{\Omega}F(x,u_{n})\,dx\\
  	&\hspace{0.3cm}-\frac{1}{\mu}\int_{Q}\frac{|u_{n}(x)-u_{n}(y)|^{p(x,y)}}{|x-y|^{N+sp(x,y)}}\,dxdy-\frac{1}{\mu}\int_{\Omega}b(x)|u_{n}(x)|^{\overline{p}(x)}dx+\frac{1}{\mu}\int_{\Omega}f(x,u_{n})u_{n}\,dx\\
  	&\geq \left(\frac{1}{p^{+}}-\frac{1}{\mu}\right)\min\{1,b_{0}\}\rho_{_{X}}(u_{n})+\int_{\Omega}\left(\frac{1}{\mu}f(x,u_{n})u_{n}-F(x,u_{n})\right)dx\\
  	&\geq \left(\frac{1}{p^{+}}-\frac{1}{\mu}\right)\min\{1,b_{0}\}\|u_{n}\|_{X_{0}}^{p^{-}}.
  	\end{align*}
  	Dividing the above inequality by $\|u_{n}\|_{X_{0}}$, since $1<p^{-}$ and if we passing to the limit as $n\longrightarrow +\infty $, we obtain a contradiction. Then $\{u_{n}\}$ is bounded in $X_{0}$. Hence, there exists a subsequence, still denote by$\{u_{n}\}$, and $u\in X_{0}$ such that $\,\, u_{n}\rightharpoonup u $ in $X_{0}$, then from $(\ref{9})$, we infer that
  	$$
  	\left\langle \mathcal{J}^{'}(u_{n}),u_{n}-u\right\rangle \longrightarrow 0 \quad \text{as }n\longrightarrow +\infty,
  	$$ 
  	that is,
  	$$
  	\int_{Q}\frac{|u_{n}(x)-u_{n}(y)|^{p(x,y)-2}(u_{n}(x)-u_{n}(y))((u_{n}(x)-u_{n}(y))-(u(x)-u(y)))}{|x-y|^{N+sp(x,y)}}\,dxdy
  	$$
  	$$
  	+\int_{\Omega}b(x)|u_{n}|^{\overline{p}(x)-2}u_{n}(u_{n}-u)\,dx-\int_{\Omega}f(x,u_{n})(u_{n}-u)\,dx\longrightarrow 0.
  	$$
  	By $(H_{1})$, we have
  	\begin{align*}
  		\left|\int_{\Omega}f(x,u_{n})(u_{n}-u)\,dx \right|&\leq \int_{\Omega}g(x)|u_{n}|^{q(x)-1}|u_{n}-u|dx\\
  		&\leq 2^{q^{-}-1}\left(\int_{\Omega}g(x)|u_{n}-u|^{q(x)}dx+\int_{\Omega}g(x)|u|^{q(x)-1}|u_{n}-u|\,dx\right)\\
  		&\leq 2^{q^{-}-1}\left(I_{1,n}+I_{2,n}\right).
  	\end{align*}
  	Let $\Omega_{k}=\{x\in \Omega:|x|\leq k\}$. Then 
  	\begin{align*}
  	I_{1,n}&=\int_{\Omega}g(x)|u_{n}-u|^{q(x)}dx\\
  	&=\int_{\Omega_{k}}g(x)|u_{n}-u|^{q(x)}dx+\int_{\Omega\backslash\Omega_{k}}g(x)|u_{n}-u|^{q(x)}dx.
  	\end{align*}
  	By Hölder inequality, we get
  	$$
  	\int_{\Omega\backslash\Omega_{k}}g(x)|u_{n}-u|^{q(x)}dx \leq 2^{q^{+}+1}\|g\|_{L^{r(x)}(\Omega\backslash\Omega_{k})}\left(\||u_{n}|^{q(x)}\|_{L^{\frac{h(x)}{q(x)}}(\Omega\backslash\Omega_{k})}+\||u|^{q(x)}\|_{L^{\frac{h(x)}{q(x)}}(\Omega\backslash\Omega_{k})}\right).
  	$$
  	Combining $(\ref{de})$ and $(\ref{10})$  in proof of Lemma $\ref{2}$-$(i)$, we get  
  	\begin{equation}\label{12}
  		\int_{\Omega\backslash\Omega_{k}}g(x)|u_{n}-u|^{q(x)}dx \longrightarrow 0 \quad \text{as }n\longrightarrow+\infty.
  	\end{equation}
  	Since $\quad h(x)< p_{s}^{*}(x)\,$  for all $x\in \Omega$, by Theorem $\ref{ba}$, we conclude that the embedding $W^{s,p(x,y)}(\Omega_{k})\hookrightarrow L^{h(x)}(\Omega_{k})$ is compact. Thus $\,\, u_{n}\rightharpoonup u$ in $W^{s,p(x,y)}(\Omega_{k})$ implies that \begin{center}
  	$\|u_{n}-u\|_{L^{h(x)}(\Omega_{k})}\longrightarrow 0$ as $n\longrightarrow+\infty$.
  	\end{center}	Again, by Hölder inequality, we obtain 
  	$$
  	\int_{\Omega_{k}}g(x)|u_{n}-u|^{q(x)}dx\leq 2\|g\|_{L^{r(x)}(\Omega_{k})}\||u_{n}-u|^{q(x)}\|_{L^{\frac{h(x)}{q(x)}}(\Omega_{k})}.
  	$$ 
  	It follows from Proposition $\ref{11}$ that 
  	$$
  	\||u_{n}-u|^{q(x)}\|_{L^{\frac{h(x)}{q(x)}}(\Omega_{k})}=\left(\|u_{n}-u\|_{L^{h(x)}(\Omega_{k})}\right)^{\overline{q}}\longrightarrow 0\quad \text{as }n\longrightarrow +\infty,
  	$$
  	where $\overline{q} \in [q^{-},q^{+}]$. Since $\|g\|_{L^{r(x)}(\Omega_{k})}$ is bounded, we find that 
  	\begin{equation}\label{13}
  	\int_{\Omega_{k}}g(x)|u_{n}-u|^{q(x)}dx\longrightarrow 0\quad \text{as }n\longrightarrow +\infty.
  	\end{equation}
  	Combining $(\ref{12})$ and $(\ref{13})$, it follows that
  	\begin{equation}\label{14}
  		I_{1,n}\longrightarrow 0 \quad \text{as }n\longrightarrow +\infty.
  	\end{equation}
  By the same miner as above, we get 
  	\begin{equation}\label{15}
  		I_{2,n}\longrightarrow 0 \quad \text{as }n\longrightarrow +\infty.
  	\end{equation}
  	Hence, relations $(\ref{14})$ and $(\ref{15})$ yield
  	$$
  	\lim_{n \to +\infty}\int_{\Omega} f(x,u_{n})(u_{n}-u)\,dx=0.
  	$$
  	Consequently
  	\begin{equation}\label{16}
  		\left\langle \widetilde{\mathscr{L}}(u_{n}),u_{n}-u\right\rangle \longrightarrow 0\quad \text{as }n\longrightarrow +\infty.
  	\end{equation}
   Next, since $u_{n}\rightharpoonup u$, from $(\ref{9})$,  we have 
  	$$
  	\left\langle \mathcal{J}^{'}(u),u_{n}-u\right\rangle \longrightarrow 0\quad \text{as }n\longrightarrow +\infty.
  $$
  Using the same argument as before, we deduce that 
  	\begin{equation}\label{17}
  	\left\langle \widetilde{\mathscr{L}}(u),u_{n}-u\right\rangle \longrightarrow 0\quad \text{as }n\longrightarrow +\infty.
  	\end{equation}
  Using $(\ref{16})$, $(\ref{17})$ and Lemma $\ref{19}$-$(ii)$, as $u_{n}\rightharpoonup u $ in $X_{0}$, we get
  $$
 \left \{ \begin{array} {cl}
  	u_{n}\rightharpoonup u \, \text{ in }X_{0},\\
  	\displaystyle\limsup_{n\to +\infty}\left\langle \widetilde{\mathscr{L}}(u_{n})-\widetilde{\mathscr{L}}(u),u_{n}-u\right\rangle \leq 0,\\
  	\widetilde{\mathscr{L}}\text{ is a mapping of type } (S_{+}).
  \end{array} \right.\Rightarrow  u_{n}\longrightarrow u\,\text{ in }X_{0}. 
  $$
 Therefore, $\mathcal{J}$ satisfies the Palais–Smale condition on $X_0$. By combining this fact with Lemmas \ref{l43} and \ref{l44}, and in light of the mountain pass theorem, we conclude that $u$ is a nontrivial critical point of $\mathcal{J}$. Consequently, $u$ is a nontrivial weak solution to problem $(\mathcal{P}_s)$.
 The proof of Theorem~\ref{3} is complete.
\end{proof}
  

\begin{thebibliography}{99}
 
 \bibitem{Adams} R. Adams, Sobolev spaces, Academic Press, New York-London, Pure and Applied Mathematics, vol. 65, 1975.
   \bibitem{ABS2} E. Azroul,  A. Benkirane, and  M. Shimi,  \textit{Eigenvalue problems involving the fractional  $p(x)$-Laplacian operator}, J. Advances in Operator Theory, vol. 4 (2019), no. 2, pp. 539--555. \url{https://doi.org/10.15352/aot.1809-1420}
    \bibitem{ABS4} E. Azroul,  A. Benkirane, and  M. Shimi, \textit{General fractional Sobolev space with variable exponent and applications to nonlocal problems}, Advances in Operator Theory, vol. 5 (2020), no 4, pp. 1512-1540, \url{https://doi.org/10.1007/s43036-020-00062-w}.
   \bibitem{ABS0}  E. Azroul,  A. Benkirane, and  M. Shimi, \textit{Existence and Multiplicity of solutions for fractional $p\left( x\right) $-Kirchhoff type problems in $\mathbb{R}^N$}, J. Applicable Analysis,  vol. 100 (2021), no. 9, pp. 2029-2048, \url{https://doi.org/10.1080/00036811.2019.1673373}
       \bibitem{ABS9} E. Azroul, A. Benkirane, M. Shimi, and M. Srati, \textit{Three solutions for fractional $p(x,.)$-Laplacian Dirichlet problems with weight}, Journal of Nonlinear Functional Analysis, vol. 2020 (2020), Article ID 22, pp. 1--18,  \url{http://jnfa.mathres.org/archives/2218}.
    \bibitem{ABS8} E. Azroul, A. Benkirane, M. Shimi, and  M. Srati,  \textit{On a class of fractional  $p(x)$-Kirchhoff type problems}, J. Applicable Analysis, vol. 100 (2021), no. 2, pp. 383--402, \url{https://doi.org/10.1080/00036811.2019.1603372}   
   \bibitem{Bahrouni1} A. Bahrouni and V. R\u{a}dulescu,\textit{ On a new fractional Sobolev space and applications to nonlocal variational problems with variable exponent}, Discrete Contin. Dyn. Syst.-S,  vol. 11 (2018), pp. 379--389.
    \bibitem{Rossi} L. M. Del Pezzo and J. D. Rossi, \textit{Traces for fractional Sobolev spaces with variable exponents},  Adv. Oper. Theory, vol. 2 (2017), pp. 435-446.
   \bibitem{Diening3} L. Diening, P. Harjulehto, P. H\"{a}st\"{o}, and  M. R\aa{u}\v{z}i\v{c}ka, \textit{Lebesgue and
       Sobolev spaces with variable exponents}. Lecture Notes in Mathematics, vol. 2017, Springer-Verlag, Heidelberg.  
       \bibitem{DiNezza}  E. Di Nezza, G. Palatucci, and E. \textit{Valdinoci, Hitchhiker's guide to the fractional Sobolev spaces}, Bull. Sci. Math, vol. 136 (2012), no. 5, pp. 521--573.
     \bibitem{Ho} K. Ho and Y. Ho Kim, \textit{A-priori bounds and multiplicity of solutions for nonlinear elliptic problems involving the fractional $p(\cdot)$-Laplacian},  Nonlinear Anal., vol. 188 (2019), pp. 179--201. 
   \bibitem{Kaufmann} U. Kaufmann, J. D. Rossi, and R. Vidal, \textit{Fractional Sobolev spaces with variable exponents and fractional $p(x)$-Laplacians}, Elec. Jour. of Qual. Th, of Diff. Equa., vol. 76 (2017), pp. 1--10.
    \bibitem{KovRak}  Kov\'{a}\v{c}ik and R\'{a}kosn\'{i}k, \textit{On spaces $L^{p(x)}$ and $W^{k,p(x)}$}, Czechoslovak
     Math. J., vol 41 (1991), pp. 592--618.
   \bibitem{Ruz} M. Ruzicka, \textit{Electrorheological fluids modeling and mathematical theory}, Springer, Berlin, 2002.
  	\bibitem{1}
  	R. Adams and J. Fournier, Sobolev spaces, Pure and Applied Mathematics (Amsterdam),
  	Vol. 140, Elsevier/Academic Press, Amsterdam, 2003. MR2424078.
  	\bibitem{5}\label{k}
  	N. Chems Eddine and A. A. Idrissi, Multiple Solutions to a $(p_1(x),\dots, p_n(x))$-Laplacian type Systems in Unbounded Domain, Azerbaijan Journal of Mathematics 10(1):3-20.
  	\bibitem{ac}\label{ac}
  	L. Diening, P. Harjulehto, P. Hästö, M. Ruzi$\check{c}$ka, Lebesgue and Sobolev spaces with variable exponents, Lecture Notes in Mathematics, Vol. 2017, Springer-Verlag, Heidelberg,
  	2011.
  	\bibitem{Fan}\label{f}
  	X. Fan and X. Han, Existence and multiplicity of solutions for $p(x)$-Laplacian equations in $\mathbb{R}^N$ , Nonlinear Anal. 59 (2004), 173–188.
  	\bibitem{ab}\label{ab}
  	X. L. Fan and D. Zhao, On the spaces $L^{p(x)}(\Omega)$ and $W^{m,p(x)}(\Omega)$, J. Math. Anal. Appl. 263
  	(2001), 424–446.
  	\bibitem{13}\label{i}
  	W. Liu and P. Zhao, Existence of positive solutions for $p(x)$-Laplacian equations in
  	unbounded domains, Nonlinear Analysis 69(10):3358-3371, DOI:10.1016/j.na.2007.09.027.
  	\bibitem{Yong}\label{g}
  	F. Yongqiang, Existence of solutions for $p(x)$-Laplacian problem on an unbounded domain, Topol. Methods Nonlinear Anal. 30(2): 235-249 (2007).  
  	 \bibitem{15}\label{zz}
  	 L. S. Yu, Nonlinear p-Laplacian problems on unbounded domains, American Mathematical Society. Vol. 115, No. 4(1992), pp.1037-1045.
  	\bibitem{e}\label{e}
      C. Zhang and X. Zhang, Renormalized solutions for the fractional $p(x)$-Laplacian equation with $L^1$ data, Nonlinear Analysis 190, DOI:10.1016/j.na.2019.111610.
    
      
 \end{thebibliography}
\end{document}